\documentclass[12pt, reqno]{amsart}
\usepackage{amssymb,amsthm,amsmath}
\usepackage[numbers,sort&compress]{natbib}
\usepackage{amssymb,amsmath}
\usepackage{amsfonts,dsfont}
\usepackage{mathrsfs}
\usepackage{latexsym}
\usepackage{amssymb}
\usepackage{amsthm}
\usepackage{indentfirst}
\usepackage{hyperref}
\usepackage{color}
\date{\today
}

\let\oldsection\section
\renewcommand\section{\setcounter{equation}{0}\oldsection}

\newtheorem{theorem}{Theorem}[section]
\newtheorem{lemma}{Lemma}[section]
\newtheorem{proposition}{Proposition}[section]
\newtheorem{definition}{Definition}[section]

\newtheorem{remark}{Remark}[section]

\allowdisplaybreaks
\begin{document}

\title[ nonlinear wave propagation in the troposphere]{Global well-posedness of strong solutions to a model for the morning glory cloud}

\author{Jinkai~Li}
\address[Jinkai~Li]{South China Research Center for Applied Mathematics and Interdisciplinary Studies, School of Mathematical Sciences,
South China Normal University, Guangzhou 510631, China and Guangdong Key Laboratory of Applied Mathematics and Artificial Intelligence}
\email{jklimath@gmail.com; jklimath@m.scnu.edu.cn}

\author{Yuan~Ma}
\address[Yuan~Ma]{School of Mathematical Sciences, South China Normal University,
Guangzhou, 510631, China}
\email{657861189@qq.com}

\author{Dong~Wang}
\address[Dong~Wang]{South China Research Center for Applied Mathematics and Interdisciplinary Studies, School of Mathematical Sciences,
South China Normal University, Guangzhou 510631, China}
\email{WDongter@m.scnu.edu.cn}

\keywords{Global well-posedness; nonlinear wave propagation; morning glory, large initial data}
\subjclass[2010]{26D10, 35Q35, 35Q86, 76D03, 76D05, 86A05, 86A10.}

%%% ----------------------------------------------------------------------

\begin{abstract}
In this paper, we investigate the global well-posedness of strong solution to a model recently derived by Constantin-Johnson \cite{CaJo} which describes the nonlinear wave propagation in the troposphere, especially for the morning glory cloud. Assuming that the initial velocity $v_0\in H^1$ and the thermodynamic forcing term $K\in L^2(0,T;L^2)$, we show that there exists a unique global strong solution to the initial boundary value problem of this system. Similar result was only known before for sufficiently small initial data in the existing literature.
\end{abstract}

%%% ----------------------------------------------------------------------
\maketitle

%\tableofcontents

\section{Introduction}
\label{sec1}
\allowdisplaybreaks
The morning glory is a spectacular cloud pattern
composed of a tubular cloud, or succession of such roll
clouds, especially in Australia. A mathematical model for the morning glory cloud pattern was derived by Constantin-Johnson \cite{CaJo}, which describes the nonlinear wave propagation in the troposphere. Due to typically stretching from horizon to horizon, the motion behaves essentially in a two dimensional way. More details about this significant nature phenomenon can be found in \cite{Bir1,Chtr1,Cred1}.
Following \cite{MatBog} and \cite{CaJo}, we consider the following system:
\begin{align}
\label{eqpeini}
\begin{cases}
&\partial_tv+v\partial_xv+w\partial_zv-\mu\Delta v+\alpha v+\beta w+K=0,  \\
&\partial_x v+\partial_zw=0,
\end{cases}
\end{align}
where the horizontal velocity $v$ and the vertical velocity $w$ are unknowns. Here $\alpha$ and $\beta $ are constants and $K$ is a given forcing term.

Several works have been done on the morning glory system \eqref{eqpeini}. Concerning this system, Constantin-Johnson \cite{CaJo} discovered a special oscillatory solution, and studied the traveling wave solutions in \cite{CaJoa1}. Matioc-Roberti \cite{MatBog} established the global existence of weak solutions and local well-posedness of strong solutions. Precisely, they considered \eqref{eqpeini} in the infinite channel $\mathbb{R}\times I$, subject to the boundary conditions $v|_{z=0}=w|_{z=0}=v|_{z=1}=0$. They established the global existence of weak solutions with $v_0\in L^2$ and $K\in L^2(0,T;L^2)$ by employing a two-step approximation approach based on the Galerkin scheme. Besides, by formulating the model as a quasilinear parabolic evolution system in an appropriate functional analytic framework, then proved that \eqref{eqpeini} has a unique local-in-time strong solution. Global existence of strong solutions was obtained by Alonso-Or$\acute{a}$n and Granero-Belinch$\acute{o}$n \cite{Alonra}; however, they required that $\|v_0\|_{L^\infty}$ is sufficiently small.

To our best knowledge, global existence of strong solutions to system \eqref{eqpeini} with large initial data is unavailable in the existing literature. The aim of this paper is to address this problem, that is to establish the global existence of strong solutions to system \eqref{eqpeini} with arbitrary large initial data.

In this paper, we consider \eqref{eqpeini} in the domains $\Omega=\mathbb{T}\times(0,1)$ subject to the following boundary and initial conditions:
\begin{align}
  &v|_{z=1}=\partial_z v|_{z=0}=w|_{z=0}=0,\label{eq5}\\
  &v \,\text{is periodic in} \,x,\label{eq6}\\
  &v|_{t=0}=v_0.\label{eq7}
\end{align}

It follows from $\eqref{eqpeini}_2$ and \eqref{eq5} that
\begin{align}
\label{omegaas}
&\displaystyle w=-\int_0^z\partial_xv(x,\xi,t)d\xi.
\end{align}

One may observe that system \eqref{eqpeini} is very similar to the primitive equations (PEs) which in the two dimensional case read as
\begin{align*}
\begin{cases}
&\partial_tv+v\partial_xv+w\partial_zv-\Delta v+\partial_xp=0,  \\
& \partial_zp=0,\\
&\partial_x v+\partial_zw=0,
\end{cases}
\end{align*}
where $(v,w)$ is the velocity and $p$ is the pressure. An essential difference between \eqref{eqpeini} and the above PEs is the presence of the pressure term. Due to the presence of the pressure term, one can complement the boundary conditions $w|_{z=0}=w|_{z=1}=0$ to the PEs. However, for system \eqref{eqpeini}, besides the boundary conditions on $v$, one can only complement it with one side boundary condition for $w$, that is with either $w|_{z=0}=0$ or $w|_{z=1}=0$ but not with both; otherwise, it may be overdetermined. One may refer to \cite{Lions1,Lions2,Lions3,J.Li4,DFA,IYWM,TTM,LiYuan,Cao1,GMK,IKMZ,Boling,MHTKa,MHATK,Cao2,J.Li5,J.Li6,J.Li1,J.Li7, J.Li8,J.LiW} for the mathematical results on the PEs.

Throughout this paper, $L^q(\Omega)$ and $W^{m,q}(\Omega)$ denote the standard Lebesgue and Sobolev spaces, repectively. For $q=2$, we use $H^m$ to replace $W^{m,2}$. We always use $\|f\|_p$ to denote the $L^p(\Omega)$ norm of $f$.

\begin{definition}\label{def1}
Given a positive time $T$ and a function $ v_0 \in H^1(\Omega) $ which is periodic in $x$ and satisfies $v_0|_{z=1}=0$.
 $v$ is called a strong solution to system
\eqref{eqpeini}, subject to \eqref{eq5}--\eqref{eq7}, on
$\Omega\times(0, T)$, if

(i) $v$ has the regularities
\begin{eqnarray*}
  &&v \in C([0,T];H^1(\Omega))\cap L^2(0,T; H^2(\Omega)),\quad
\partial_t v\in L^2(0,T;L^2(\Omega));
\end{eqnarray*}

(ii) $v$ satisfies \eqref{eqpeini} a.e.\,in $\Omega \times (0,T)$, with $w$ given by \eqref{omegaas}, and satisfies the initial and boundary conditions (\ref{eq5})--(\ref{eq7}).
\end{definition}

\begin{definition}\label{def2}
 $v$ is called a global strong solution to system \eqref{eqpeini}, subject to (\ref{eq5})--(\ref{eq7}), if it is a strong solution
on $\Omega\times(0,T)$, for any $T \in (0,\infty)$.
\end{definition}

Our main result is stated as follows
\begin{theorem}\label{the1}
Given $K\in L^2_{loc}([0,\infty);L^2(\Omega))$. Assume that $v_0\in H^1(\Omega)$ is periodic in $x$ and satisfies $v_0|_{z=1}=0$. Then, there exists a unique global strong solution $v$ to system \eqref{eqpeini}, subject to \eqref{eq5}--\eqref{eq7}.
\end{theorem}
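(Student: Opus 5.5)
Our plan is the standard route: local existence plus a priori estimates, continued globally. Local existence and uniqueness of strong solutions can be obtained by a Galerkin scheme, or by adapting the quasilinear parabolic framework of Matioc--Roberti to $\mathbb{T}\times(0,1)$ with the mixed boundary conditions \eqref{eq5}. Uniqueness will follow from an $L^2$ energy estimate for the difference of two strong solutions. Global existence then reduces to showing that $\sup_{0\le t\le T}\|\nabla v\|_2^2+\int_0^T\|v\|_{H^2}^2dt$ stays finite for every $T$. The estimates are carried out in the following order.

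\textbf{Step 1: $L^2$ estimate.} Multiply by $v$ and integrate. Since $\partial_xv+\partial_zw=0$, $w|_{z=0}=0$ and $v|_{z=1}=0$, the convective term $\int(v\partial_xv+w\partial_zv)v$ vanishes after integration by parts; the boundary term at $z=0$ vanishes because $w=0$ there, and at $z=1$ because $v=0$. The term $\beta\int wv$ is controlled by $\|w\|_2\le\|\partial_xv\|_2$, which gives $\le \frac\mu4\|\nabla v\|_2^2+C\|v\|_2^2$. Gronwall then yields $v\in L^\infty L^2\cap L^2H^1$.

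\textbf{Step 2: $L^\infty_tL^q$ or $\partial_zv$ estimates.} We treat $v$ like a scalar in 2D primitive-equation type analysis. Multiplying by $|v|^{q-2}v$ again kills the transport term. The term $\beta\int w|v|^{q-2}v$ is handled by integrating by parts in $x$ after writing $w=-\partial_x\int_0^zv$, which gives a bound by $\int|\int_0^z v|\,|v|^{q-2}|\partial_xv|$ and is absorbed using the diffusion $\int|v|^{q-2}|\nabla v|^2$. This yields $v\in L^\infty(0,T;L^q)$ for all $q<\infty$. Next we estimate $u=\partial_zv$, which has boundary condition $u|_{z=0}=0$. Differentiating in $z$ gives
\[
\partial_tu+v\partial_xu+w\partial_zu-\mu\Delta u+\alpha u-\beta\partial_xv+\partial_zK=0,
\]
because $\partial_zv\,\partial_xv+\partial_zw\,\partial_zv=u\partial_xv-\partial_xv\,u=0$. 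This is a cancellation exactly as in the 2D PEs. The $K$ term needs care since $\partial_zK\notin L^2$, so we use the weak form, multiply by $u$, and integrate $\partial_zK\,u$ by parts. A boundary term at $z=1$ then appears through $\partial_zu$; to avoid it we may instead work directly with $-\Delta v$ in Step 3. Either way we obtain $\partial_zv\in L^\infty L^2\cap L^2H^1$.

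\textbf{Step 3: $H^1$ estimate (main obstacle).} Test the equation with $-\Delta v$. The forcing gives $\|K\|_2\|\Delta v\|_2$, which is fine. The hard term is $\int w\partial_zv\,\Delta v$, with $|w|\le\int_0^1|\partial_xv|\,dz$. Using the anisotropic inequality
\[
\int_\Omega\Big(\int_0^1|f|dz\Big)|g||h|\le C\|f\|_2\|g\|_2^{1/2}\big(\|g\|_2+\|\partial_xg\|_2\big)^{1/2}\|h\|_2,
\]
with $f=\partial_xv$, $g=\partial_zv$ and $h=\Delta v$, we get the bound $C\|\nabla v\|_2\|\partial_zv\|_2^{1/2}\|\nabla\partial_zv\|_2^{1/2}\|\Delta v\|_2$. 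The term $v\partial_xv\,\Delta v$ is bounded by $\|v\|_4\|\nabla v\|_4\|\Delta v\|_2$ and handled through Step 2 together with Gagliardo--Nirenberg. Because Step 2 gives $\|\nabla\partial_zv\|_2\in L^2_t$, Young's inequality reduces everything to
\[
\frac d{dt}\|\nabla v\|_2^2+\mu\|\Delta v\|_2^2\le C\big(1+\|\nabla\partial_zv\|_2^2\big)\|\nabla v\|_2^2+C\|K\|_2^2,
\]
and Gronwall closes the estimate. Elliptic regularity for the mixed Dirichlet/Neumann problem on $\mathbb{T}\times(0,1)$ then gives $L^2H^2$, and the equation gives $\partial_tv\in L^2L^2$. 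The delicate point, which we expect to be the crux, is obtaining the $\partial_zv$ bound of Step 2 with only $K\in L^2L^2$, so that the anisotropic estimate in Step 3 is usable.
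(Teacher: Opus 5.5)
\textbf{There is a genuine gap in Step 2, in the estimate for $u=\partial_zv$, and Step 3 cannot close without it.}

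Your derivation of the $u$-equation is correct. Testing it with $u$ does not remove the transport term. Using $\partial_xv+\partial_zw=0$,
\[
\int_\Omega (v\partial_xu+w\partial_zu)\,u\,dxdz=\frac12\int_{\mathbb{T}}\big(w\,|\partial_zv|^2\big)(x,1,t)\,dx .
\]
Nothing makes this vanish. Only $w|_{z=0}=0$ is available, while $w(x,1,t)=-\partial_x\int_0^1v\,dz\neq0$. Also $\partial_zv|_{z=1}\neq0$, since only $v|_{z=1}=0$ is imposed.

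The diffusion term adds $-\mu\int_{\mathbb{T}}(\partial_z^2v\,\partial_zv)(x,1,t)\,dx$. Restricted to $z=1$, the equation reads $\mu\partial_z^2v=w\partial_zv+\beta w+K$. After combining (the traces of $K$ cancel against your integration by parts of $\partial_zK\,u$), you are left with
\[
\frac12\frac{d}{dt}\|\partial_zv\|_2^2+\mu\|\nabla\partial_zv\|_2^2=\frac12\int_{\mathbb{T}}\big(w|\partial_zv|^2\big)(x,1,t)\,dx+\cdots .
\]
This cubic boundary term is present even when $K=0$, so the obstacle is not $\partial_zK\notin L^2$. It is exactly the obstruction the paper points out. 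The cancellation you invoke is the one for the primitive equations, where $w$ vanishes on both $z=0$ and $z=1$. Trace and interpolation only bound the term by $C\|\partial_xv\|_2\|\partial_zv\|_2^{1/2}\|\partial_zv\|_{H^1}^{3/2}$. After Young's inequality this leaves a Gronwall coefficient $\|\partial_xv\|_2^4$, which is not known to be in $L^1(0,T)$ at this stage.

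Your fallback of ``working directly with $-\Delta v$'' is circular. Testing with $-\Delta v$ creates no bad boundary terms, but your Step 3 Gronwall coefficient $\|\partial_zv\|_2\|\nabla\partial_zv\|_2$ lies in $L^1(0,T)$ only if Step 2 holds. Without Step 2 you can only use $\|\nabla\partial_zv\|_2\le C\|\Delta v\|_2$. Your anisotropic bound then gives $\frac{d}{dt}\|\nabla v\|_2^2\le C\|\nabla v\|_2^6+\cdots$, which controls the solution only locally in time.

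\textbf{The missing idea is to estimate $w$ instead of $\partial_zv$.} The function $w$ satisfies its own parabolic equation \eqref{eqomega1}, with harmless boundary conditions $w|_{z=0}=0$ and $\partial_zw|_{z=1}=-\partial_xv|_{z=1}=0$. Its $L^2$ energy estimate closes using only your Step 1 and the $L^4$ bound, in particular $\int_0^T\|v\partial_xv\|_2^2dt<\infty$, after moving the $x$-derivative in $\int_0^z\partial_x(v\partial_xv)\,d\xi$ onto $w$. This gives $w\in L^\infty(0,T;L^2)\cap L^2(0,T;H^1)$, and then
\[
\Big|\int_\Omega w\,\partial_zv\,\Delta v\,dxdz\Big|\le\|w\|_4\|\partial_zv\|_4\|\Delta v\|_2\le\frac\mu4\|\Delta v\|_2^2+C\|w\|_2^2\|\nabla w\|_2^2\|\nabla v\|_2^2+\cdots .
\]
So the $H^1$ estimate closes with a coefficient in $L^1(0,T)$, and no separate estimate on $\partial_zv$ is needed.

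Your Step 1 is fine. So is the $L^q$ part of Step 2, including the integration by parts in $x$ for the $\beta$-term. The handling of $v\partial_xv$ in Step 3 is also fine.
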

\begin{remark}
(i)Theorem \ref{the1} also holds when the boundary condition $\partial_zv|_{z=0}=0$ in \eqref{eq5} is replaced by $v|_{z=0}$.\\
(ii)By adopting the arguments in this paper, one can also show that  Theorem \ref{the1} still holds for the case $\Omega:=\mathbb{R}\times (0,1)$.
\end{remark}
\begin{remark}
Inspired by the theory of the 2D primitive equations,
Holst-Rademacher \cite{holrad} considered the following system
\begin{align*}
\begin{cases}
&\partial_tv+v\partial_xv+w\partial_zv-\mu\Delta v+\alpha v+\beta w+K+\partial_xp=0,  \\
&\partial_x v+\partial_zw=0,
\end{cases}
\end{align*}
subject to
\begin{align*}
\begin{cases}
&v, w \text{ are periodic in } x,\\
&v|_{z=0}=w|_{z=0}=v|_{z=1}=w|_{z=1}=0,\\
&v|_{t=0}=v_0,
\end{cases}
\end{align*}
and obtained the global existence of strong solutions.
Note that this modified system has a structural relation to the 2D
primitive equations, for which global well-posedness of strong soltions has been known for long time.
\end{remark}

The main difficulty to prove Theorem \ref{the1} is to get the a priori $L^\infty(0,T;H^1)$ estimate on $v$. Since system \eqref{eqpeini} seems like the two dimensional PEs, one may try to use the same idea as for the PEs to deal with \eqref{eqpeini}, subject to \eqref{eq5}--\eqref{eq7}. In particular, one may try to carry out either the a priori $\|v\|_{L^\infty(0,T;H^1)}$ or $\|\partial_z v\|_{L^\infty(0,T;L^2)}$ after getting the $L^\infty(0,T;L^q)$ type estimate on $v$, which is now one of the standard ideas of getting the a priori estimates for the PEs. Unfortunately, it is not the case for our problem. In fact, due to the absence of boundary condition for $w$ on $\{z=1\}$, if following the arguments as for the PEs to get the a priori $\|v\|_{L^\infty(0,T;H^1)}$ or $\|\partial_zv\|_{L^\infty(0,T;L^2)}$, one will encounter a boundary term $\int_0^1w|\partial_zv|^2dx|_{z=0}^{z=1}$ which comes from integrating by parts to the term $\int_\Omega w\partial_zv\partial_z^2vdxdz$. Note that this boundary term vanishies in the case of the PEs, as the boundary conditions $w|_{z=0}=w|_{z=1}=0$ are complemented to the PEs. To overcome this difficult, following the idea in Li-Wang \cite{LiWangz}, we first carry out the a priori $L^\infty(0,T;L^2)\cap L^2(0,T;H^1)$ estimates on $v^2$. Based on this, by deriving the dynamical equation for $w$, see \eqref{eqomega1} in the below, and performing the energy estimate to \eqref{eqomega1}, we get the $L^\infty(0,T;L^2)\cap L^2(0,T;H^1)$ estimate on $w$. Then, one can successfully deal with the term $\int_\Omega w\partial_zv\partial_z^2vdxdz$ as
\begin{align*}
\left|\int_\Omega w\partial_zv\partial_z^2vdxdz\right|\leq C\|w\|_2^2\|\nabla w\|_2^2\|\partial_zv\|_2^2+\text{ other term},
\end{align*}
and further get the desired a priori $L^\infty(0,T;H^1)\cap L^2(0,T;H^2)$ estimate on $v$.

The rest of this paper is arranged as follows: in the next section, Section 2, we
collect some preliminary results which will be used in the subsequent sections; in Section
3, we establish the local well-posedness to system \eqref{eqpeini}, subject to \eqref{eq5}--\eqref{eq7}. In section 4, we establish the a priori estimates to strong
solutions, especially the $L^\infty(0,T; H^1)$-estimate on $v$; in Section 5, we give the prove of Theorem 1.1.
Throughout this paper, the letter C denotes a general positive constant which may
vary from line to line.
\section{Preliminaries}\label{sec2}

In this section, we state several preliminary lemmas which will be used
in the rest of this paper.
\begin{lemma}\label{lem1a}
It holds that
\begin{align*}
&\left\|v^3\right\|_{L^2_xL^1_z}\leq C\left(\|v\partial_xv\|_2^\frac{1}{4}+\left\|v\right\|_4^\frac{1}{2}\right)\|v\|_4^\frac{3}{2}\left(\|\partial_xv\|_2^\frac{1}{4}+\|v\|_2^\frac{1}{4}\right)\|v\|_2^\frac{3}{4},\\
&\left\|v^3\right\|_{2}\leq C\left(\|v\nabla v\|_2^\frac{1}{2}+\|v\|_4\right)\|v\|_4^2,
\end{align*}
where $C$ is an absolute positive constant.
\end{lemma}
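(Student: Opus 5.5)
The plan is to derive both inequalities from elementary interpolation: a Gagliardo--Nirenberg inequality on the two-dimensional domain $\Omega$ for the second estimate, and one-dimensional interpolation in $x$ for vertically integrated quantities for the first. Throughout, $\Omega=\mathbb{T}\times(0,1)$ is bounded, so lower-order terms appear in all Sobolev bounds; these produce the terms $\|v\|_4$, $\|v\|_4^{1/2}$ and $\|v\|_2^{1/4}$ in the statement.

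\textbf{Second inequality.} We write $\|v^3\|_2^2=\int_\Omega v^6=\|v^2\|_3^3$. We apply the two-dimensional Gagliardo--Nirenberg inequality on the bounded Lipschitz domain $\Omega$,
\begin{align*}
\|f\|_3\le C\|f\|_2^{2/3}\left(\|\nabla f\|_2+\|f\|_2\right)^{1/3},
\end{align*}
with $f=v^2$. Since $\|v^2\|_2=\|v\|_4^2$ and $|\nabla(v^2)|=2|v\nabla v|$, this gives
\begin{align*}
\|v^3\|_2^2\le C\|v\|_4^4\left(\|v\nabla v\|_2+\|v\|_4^2\right).
\end{align*}
Taking square roots and using $(a+b)^{1/2}\le a^{1/2}+b^{1/2}$ yields the claimed bound.

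\textbf{First inequality.} We set $A(x)=\int_0^1v^4\,dz$ and $B(x)=\int_0^1v^2\,dz$. By Cauchy--Schwarz in $z$, $\int_0^1|v|^3dz\le A^{1/2}B^{1/2}$. Then H\"older in $x$ gives
\begin{align*}
\|v^3\|_{L^2_xL^1_z}\le\|A^{1/2}\|_{L^4_x}\|B^{1/2}\|_{L^4_x}=\|A\|_{L^2_x}^{1/2}\|B\|_{L^2_x}^{1/2}.
\end{align*}
For a function $g$ on $\mathbb{T}$ we use the one-dimensional bounds
\begin{align*}
\|g\|_{L^2_x}\le\|g\|_{L^1_x}^{1/2}\|g\|_{L^\infty_x}^{1/2},\qquad \|g\|_{L^\infty_x}\le C\left(\|g\|_{L^1_x}+\|g'\|_{L^1_x}\right).
\end{align*}
For $A$ we have $\|A\|_{L^1_x}=\|v\|_4^4$, and, differentiating under the integral and applying Cauchy--Schwarz on $\Omega$, $\|A'\|_{L^1_x}\le4\int_\Omega|v|^3|\partial_xv|\le4\|v\|_4^2\|v\partial_xv\|_2$. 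Hence
\begin{align*}
\|A\|_{L^2_x}^{1/2}\le C\|v\|_4\left(\|v\|_4^2\right)^{1/4}\left(\|v\|_4^2+\|v\partial_xv\|_2\right)^{1/4}\le C\|v\|_4^{3/2}\left(\|v\partial_xv\|_2^{1/4}+\|v\|_4^{1/2}\right).
\end{align*}
Similarly, $\|B\|_{L^1_x}=\|v\|_2^2$ and $\|B'\|_{L^1_x}\le2\|v\|_2\|\partial_xv\|_2$, so
\begin{align*}
\|B\|_{L^2_x}^{1/2}\le C\|v\|_2^{3/4}\left(\|v\|_2+\|\partial_xv\|_2\right)^{1/4}\le C\|v\|_2^{3/4}\left(\|\partial_xv\|_2^{1/4}+\|v\|_2^{1/4}\right).
\end{align*}
Multiplying these two bounds gives exactly the first inequality.

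The only delicate point is bookkeeping of exponents, in particular choosing the split $\int_0^1|v|^3dz\le A^{1/2}B^{1/2}$ rather than, say, a sup in $z$. With this split, the $x$-derivatives fall only on $v^2$ and $v^4$, which is what produces the anisotropic factors $\|v\partial_xv\|_2$ and $\|\partial_xv\|_2$ with no $\partial_z$ derivatives. All steps are routine once this split is fixed.
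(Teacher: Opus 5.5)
Your argument is correct. For the second inequality it is essentially the paper's proof: both apply two-dimensional Ladyzhenskaya/Gagliardo--Nirenberg interpolation to $v^2$. The paper splits $\|v^3\|_2\le\|v^2\|_4\|v\|_4$ and interpolates $L^4$, while you interpolate $L^3$ directly, and the outcome is identical.

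For the first inequality you share the paper's opening step (Cauchy--Schwarz in $z$, then H\"older $L^4_x\times L^4_x$). You then bound $\|v^2\|_{L^4_xL^2_z}=\|A\|_{L^2_x}^{1/2}$ and $\|v\|_{L^4_xL^2_z}=\|B\|_{L^2_x}^{1/2}$ by a different route.

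\begin{itemize}
\item The paper exchanges the order of the norms by Minkowski, $\|f\|_{L^4_xL^2_z}\le\|f\|_{L^2_zL^4_x}$. It then applies the one-dimensional inequality $\|f\|_{L^4_x}\le C\|f\|_{L^2_x}^{3/4}\|f\|_{H^1_x}^{1/4}$ on each horizontal slice and finishes with H\"older in $z$.
\item You instead treat the vertical integrals $A$ and $B$ as genuinely one-dimensional functions. You interpolate between $L^1_x$ and $L^\infty_x$ using $W^{1,1}(\mathbb{T})\hookrightarrow L^\infty(\mathbb{T})$, and control $\|A'\|_{L^1_x}$ and $\|B'\|_{L^1_x}$ by a single Cauchy--Schwarz over $\Omega$.
\end{itemize}

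Your route avoids the norm exchange and makes the exponent bookkeeping very transparent. The paper's slice-wise $L^2$-based interpolation is the same anisotropic device it already uses in its local existence estimates, so it economizes on tools. The exponents come out the same either way. Differentiating $A$ under the integral is harmless for $v\in H^2(\Omega)$, which is the regularity available at a.e.\ time wherever the lemma is applied.
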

\begin{proof}
It follows from the Minkowski, H\"{o}lder, and Gagliardo-Nirenberg inequalities that
\begin{align*}
\left\|v^3\right\|_{L^2_xL^1_z}=&\left\|\left\|v^3\right\|_{L_z^1}\right\|_{L^2_x}
\leq\left\|\left\|v^2\right\|_{L^2_z}\left\|v\right\|_{L^2_z} \right\|_{L^2_x} \leq \left\|\left\|v^2\right\|_{L^2_z}\right\|_{L^4_x}\left\|\left\|v\right\|_{L^2_z} \right\|_{L^4_x}\\
=&\left\|v^2\right\|_{L^4_xL^2_z}\|v\|_{L^4_xL^2_z}\leq \left\|v^2\right\|_{L^2_zL^4_x}\|v\|_{L^2_zL^4_x}
=\left\|\left\|v^2\right\|_{L^4_x}\right\|_{L^2_z}\left\|\|v\|_{L^4_x}\right\|_{L^2_z}\\
\leq& C\left\|\left\|v^2\right\|_{L^2_x}^\frac{3}{4}\left\|v^2\right\|_{H^1_x}^\frac{1}{4}\right\|_{L^2_z}
\left\|\|v\|_{L^2_x}^\frac{3}{4}\|v\|_{H^1_x}^\frac{1}{4}\right\|_{L^2_z}\\
\leq&C\|v^2\|_2^\frac{3}{4}\left(\|v\partial_xv\|_2^\frac{1}{4}+\left\|v^2\right\|_2^\frac{1}{4}\right)\|v\|_2^\frac{3}{4}\left(\|\partial_xv\|_2^\frac{1}{4}+\|v\|_2^\frac{1}{4}\right)\\
=& C\|v\|_4^\frac{3}{2}\left(\|v\partial_xv\|_2^\frac{1}{4}+\left\|v\right\|_4^\frac{1}{2}\right)\|v\|_2^\frac{3}{4}\left(\|\partial_xv\|_2^\frac{1}{4}+\|v\|_2^\frac{1}{4}\right)
\end{align*}
and
\begin{align*}
\left\|v^3\right\|_2
\leq\left\|v^2\right\|_4\|v\|_4\leq C\left\|v^2\right\|_{H^1}^\frac{1}{2}\left\|v^2\right\|_2^\frac{1}{2}\|v\|_4\leq C\left(\|v\nabla v\|_2^\frac{1}{2}+\|v\|_4\right)\|v\|_4^2,
\end{align*}
proving the conclusion.
\end{proof}

 We also need the following Aubin-Lions Lemma.
\begin{lemma}[Aubin-Lions Lemma, See Corollary 4 in \cite{JSimon}]\label{lemma7}
Assume that $X$, $B$ and $Y$ are three Banach spaces, with $X\hookrightarrow\hookrightarrow B\hookrightarrow Y$. Then the following two items hold:

(i) If $\mathcal{F}$ is a bounded subset of $L^p(0,\mathcal{T};X)$, where $1\leq p< \infty$,
and $\frac{\partial \mathcal{F}}{\partial t} :=\left\{\frac{\partial f}{\partial t} \big| f \in \mathcal{F}\right\}$ is bounded in
$L^1(0,\mathcal{T};Y)$, then $\mathcal{F}$ is relatively compact in $L^p(0,\mathcal{T};B)$;

(ii) if $\mathcal{F}$ is a bounded subset of $L^{\infty}(0,\mathcal{T};X)$, and $\frac{\partial \mathcal{F}}{\partial t}$
is bounded in $L^r(0,\mathcal{T};Y)$, where $r>1$, then $\mathcal{F}$ is relatively compact in $C([0,\mathcal{T}];B)$.
\end{lemma}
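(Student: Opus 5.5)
This is a classical compactness result, so I would follow Simon's argument. The plan has three steps: an interpolation (Ehrling) inequality to pass from $Y$ to $B$, a time-regularization to obtain compactness for each fixed time, and a Kolmogorov--Riesz/Arzel\`a--Ascoli type criterion to assemble these into compactness in $L^p(0,\mathcal T;B)$ or $C([0,\mathcal T];B)$.

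\textbf{Step 1 (Ehrling's lemma).} Using $X\hookrightarrow\hookrightarrow B\hookrightarrow Y$, I would show that for every $\eta>0$ there is $C_\eta$ with
\begin{align*}
\|u\|_B\le \eta\|u\|_X+C_\eta\|u\|_Y\qquad\text{for all } u\in X.
\end{align*}
The proof is by contradiction. If it failed for some $\eta$, there would be $u_n$ with $\|u_n\|_X=1$ and $\|u_n\|_B>\eta+n\|u_n\|_Y$. Compactness of $X\hookrightarrow B$ gives a subsequence $u_n\to u$ in $B$. Since $\|u_n\|_B$ is bounded, $\|u_n\|_Y\to0$, so $u=0$ by injectivity of $B\hookrightarrow Y$. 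This contradicts $\|u\|_B=\lim\|u_n\|_B\ge\eta$.

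\textbf{Step 2 (time translates).} For $f\in\mathcal F$ and small $h>0$, I would write
\begin{align*}
f(t+h)-f(t)=\int_t^{t+h}\partial_tf\,ds .
\end{align*}
This gives $\|\tau_hf-f\|_{L^1(0,\mathcal T-h;Y)}\le h\|\partial_tf\|_{L^1(0,\mathcal T;Y)}\le Ch$. In case (ii), H\"older's inequality gives the sharper bound $\|f(t+h)-f(t)\|_Y\le Ch^{1-1/r}$ uniformly in $t$.

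Combining this with Step 1 yields, in case (i),
\begin{align*}
\|\tau_hf-f\|_{L^p(B)}\le 2\eta\sup_{\mathcal F}\|f\|_{L^p(X)}+C_\eta\|\tau_hf-f\|_{L^p(Y)} .
\end{align*}
Here I must upgrade the $L^1(Y)$ smallness to $L^p(Y)$. I would do this by interpolating between the $L^1(Y)$ bound and the uniform $L^p(X)\subset L^p(Y)$ bound. This is easy for $p=1$. For $p>1$, I would use Simon's device: apply the argument to the averages $\frac1h\int_t^{t+h}f\,ds$, which are uniformly bounded in $X$ and hence relatively compact in $B$ at each fixed time.

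\textbf{Step 3 (compactness criterion).} I would invoke Simon's characterization: $\mathcal F\subset L^p(0,\mathcal T;B)$ is relatively compact if and only if
\begin{itemize}
\item the sets $\{\int_{t_1}^{t_2}f\,dt: f\in\mathcal F\}$ are relatively compact in $B$ for all $t_1<t_2$, and
\item $\|\tau_hf-f\|_{L^p(0,\mathcal T-h;B)}\to0$ uniformly on $\mathcal F$ as $h\to0$.
\end{itemize}
The first condition holds because these integrals are bounded in $X$. The second comes from Step 2. For (ii), I would apply Arzel\`a--Ascoli in $C([0,\mathcal T];B)$. Pointwise relative compactness follows because each $f(t)$ is bounded in $X$. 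Equicontinuity in $B$ follows from Step 1 together with the uniform H\"older bound in $Y$:
\begin{align*}
\|f(t)-f(s)\|_B\le 2\eta M+C_\eta C|t-s|^{1-1/r},
\end{align*}
where $M$ bounds $\mathcal F$ in $L^\infty(0,\mathcal T;X)$. Note that $f\in C([0,\mathcal T];Y)$, and a priori only weakly continuous into $X$, so the values $f(t)$ should be taken as those of this continuous representative.

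The main obstacle is the $p>1$ case of (i): the derivative is controlled only in $L^1(Y)$, so the time translates are small only in $L^1$. The key point requiring care is the averaging trick combined with the compactness criterion of Step 3.
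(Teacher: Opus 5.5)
The paper does not prove this lemma; it only cites Simon's Corollary 4. Your outline (Ehrling's inequality, time translates, Simon's $L^p(0,\mathcal T;B)$ criterion, Arzel\`a--Ascoli) is Simon's route, and Ehrling, the case $p=1$, and the integral condition are fine.

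The step you flag yourself, case (i) with $p>1$, is not closed by your proposal. It also cannot be closed with the information you take from $\partial_t f$. Interpolating the uniform $L^1(0,\mathcal T-h;Y)$ smallness of $\tau_hf-f$ against an $L^p$ bound gives smallness only in $L^q$ for $q<p$, never in $L^p$. This is not a technical shortfall. Take $X=B=Y=\mathbb R$ and $f_n=n^{1/p}\chi_{(0,1/n)}$ on $(0,1)$. The family is bounded in $L^p(0,1)$, satisfies $\sup_n\|\tau_hf_n-f_n\|_{L^1(0,1-h)}\le 2h^{1-1/p}$, and has bounded integrals over every $(t_1,t_2)$. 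Yet it is not relatively compact in $L^p(0,1)$, since $f_n\to0$ a.e.\ while $\|f_n\|_{L^p}=1$. The averaging device does not repair this. It only gives compactness of the averages $M_hf(t)=\frac1h\int_t^{t+h}f\,ds$. Passing back to $f$ needs $\|M_hf-f\|_{L^p(0,\mathcal T-h;B)}\le\sup_{0<s\le h}\|\tau_sf-f\|_{L^p(0,\mathcal T-h;B)}$ to be small uniformly on $\mathcal F$, which is exactly the missing estimate.

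What is missing is a second use of the derivative bound. For every $t$,
$\|f(t+h)-f(t)\|_Y\le\int_t^{t+h}\|\partial_sf\|_Y\,ds\le\|\partial_tf\|_{L^1(0,\mathcal T;Y)}$, so the translates are bounded in $L^\infty(0,\mathcal T-h;Y)$ uniformly on $\mathcal F$. Interpolating $L^1$ against $L^\infty$, rather than against $L^p$, gives
\begin{align*}
\|\tau_hf-f\|_{L^p(0,\mathcal T-h;Y)}^p
&\le\int_0^{\mathcal T-h}\Big(\int_t^{t+h}\|\partial_sf\|_Y\,ds\Big)^pdt\\
&\le\|\partial_tf\|_{L^1(0,\mathcal T;Y)}^{p-1}\int_0^{\mathcal T-h}\int_t^{t+h}\|\partial_sf\|_Y\,ds\,dt\le h\,\|\partial_tf\|_{L^1(0,\mathcal T;Y)}^{p}.
\end{align*}
Inserted into your Ehrling estimate, this yields the translate condition in $L^p(0,\mathcal T-h;B)$. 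Simon's criterion then gives (i) for every $1\le p<\infty$, with no averaging needed.

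A smaller point in (ii): for non-reflexive $X$, the $Y$-continuous representative need not be weakly continuous into $X$. In fact $f(t)$ can fail to lie in $X$ at exceptional times, so ``each $f(t)$ is bounded in $X$'' holds only on a full-measure set. Derive your $B$-modulus of continuity on that set and extend it to all of $[0,\mathcal T]$ by continuity. Then get relative compactness of $\{f(t):f\in\mathcal F\}$ at every $t$ by approximating from that set and using completeness of $B$.
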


\section{Local well-posedness}
This section is devoted to proving the local well-posedness for system \eqref{eqpeini}, subject to \eqref{eq5}--\eqref{eq7}.
\subsection{Local existence}
\begin{proposition}
\label{prolocal}
Given $K\in L^2_{loc}([0,\infty);L^2(\Omega))$. Assume that $v_0\in H^1(\Omega)$ such that $v_0$ is periodic in $x$ and $v_0|_{z=1}=0$. Then, there is a positive time $T$ depending only on $\|v_0\|_{H^1(\Omega)}$ and $K$, such that system \eqref{eqpeini}, subject to \eqref{eq5}--\eqref{eq7} has a unique solution $v$ on $\Omega\times (0,T)$, satisfying
\begin{align*}
&v\in C([0,T]; H^1(\Omega))\cap L^2(0,T; H^2(\Omega)), \quad \partial_tv\in L^2\left(\Omega\times(0,T)\right).
\end{align*}

\end{proposition}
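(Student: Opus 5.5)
We construct the local solution by Galerkin approximation combined with local-in-time $H^1$ energy estimates, and then pass to the limit using Lemma \ref{lemma7}. The basis consists of eigenfunctions of $-\Delta$ on $\Omega$ with periodicity in $x$, $\partial_z\phi|_{z=0}=0$ and $\phi|_{z=1}=0$; explicitly these are $e^{ikx}\cos((m+\frac12)\pi z)$. For each $n$ we solve the ODE system for $v_n$ in the span of the first $n$ eigenfunctions, with $w_n=-\int_0^z\partial_xv_n\,d\xi$ and $K$, $v_0$ projected. This gives $v_n$ on a short interval, and the estimates below extend it to a time $T$ independent of $n$. The key property of the basis is that $-\Delta v_n$ is an admissible test function. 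Consequently, testing the equation with $v_n$ and with $-\Delta v_n$ produces no boundary terms from the Laplacian, since $\partial_zv_n=0$ at $z=0$ and $\partial_tv_n=0$ at $z=1$.

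The main step is the estimate obtained by testing with $-\Delta v_n$:
\[
\frac12\frac{d}{dt}\|\nabla v_n\|_2^2+\mu\|\Delta v_n\|_2^2
\le \Big|\int (v_n\partial_xv_n+w_n\partial_zv_n)\Delta v_n\Big| + C(\|v_n\|_2+\|\nabla v_n\|_2+\|K\|_2)\|\Delta v_n\|_2 ,
\]
where we use $\|w_n\|_2\le\|\partial_xv_n\|_2$ for the $\alpha$, $\beta$ terms. Since we only need a short time, we can avoid integrating by parts in the nonlinear terms altogether; this sidesteps the problematic boundary term at $z=1$. For the first nonlinear term, the two-dimensional Ladyzhenskaya/Agmon inequalities give
\[
\|v_n\|_\infty\le C\|v_n\|_{H^1}^{1/2}\|v_n\|_{H^2}^{1/2}.
\]
For the term involving $w_n$, we use the anisotropic bound $|w_n(x,z)|\le\int_0^1|\partial_xv_n|\,dz$. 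With $f=\int_0^1|\partial_x v_n|dz$, this gives
\[
\int|w_n\partial_zv_n\Delta v_n|\le \int_{\mathbb T} f\Big(\int_0^1|\partial_zv_n||\Delta v_n|dz\Big)dx\le \|f\|_{L^\infty_x}\|\partial_zv_n\|_2\|\Delta v_n\|_2 .
\]
We then bound $\|f\|_{L^\infty_x}\le C\|\partial_xv_n\|_2^{1/2}\|\partial_xv_n\|_{H^1}^{1/2}$ by the one-dimensional Agmon inequality in $x$. To turn $\|v_n\|_{H^2}$ into $\|\Delta v_n\|_2$, we use elliptic regularity for the mixed boundary problem, $\|v\|_{H^2}\le C(\|\Delta v\|_2+\|v\|_2)$, which holds in the strip with these boundary conditions. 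Together with Poincar\'e's inequality, since $v=0$ on $z=1$, both nonlinear terms are bounded by
\[
C\|\nabla v_n\|_2^{3/2}\|\Delta v_n\|_2^{3/2}\le \frac{\mu}{4}\|\Delta v_n\|_2^2+C\|\nabla v_n\|_2^6 .
\]
Adding the $L^2$ estimate from testing with $v_n$, we obtain, with $y=\|v_n\|_{H^1}^2$,
\[
y'+\mu\|\Delta v_n\|_2^2\le C(1+y^3)+C\|K\|_2^2 .
\]
A comparison argument then yields a time $T>0$, depending only on $\|v_0\|_{H^1}$ and $\int_0^1\|K\|_2^2\,dt$, on which $v_n$ is bounded in $L^\infty(0,T;H^1)\cap L^2(0,T;H^2)$ uniformly in $n$. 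Reading off the equation, $\partial_tv_n$ is bounded in $L^2(0,T;L^2)$.

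To pass to the limit, Lemma \ref{lemma7}(i), with $X=H^2$, $B=H^1$, $Y=L^2$, gives strong convergence of a subsequence in $L^2(0,T;H^1)$. This is enough to pass to the limit in $v_n\partial_xv_n$ and in $w_n\partial_zv_n$; for the latter, $w_n\to w$ in $L^2(0,T;L^2)$ because $\|w_n-w\|_2\le\|\partial_x(v_n-v)\|_2$. The linear terms pass to the limit weakly. The boundary conditions are preserved under weak $H^2$ limits. Continuity $v\in C([0,T];H^1)$ follows from $v\in L^2H^2$, $\partial_tv\in L^2L^2$ by the standard Lions--Magenes interpolation argument, and this also shows $v|_{t=0}=v_0$.

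For uniqueness, let $v_1,v_2$ be two solutions and set $\delta v=v_1-v_2$, $\delta w=w_1-w_2$. Testing the difference equation with $\delta v$ gives nonlinear terms of the form $\int(\delta v\,\partial_xv_2+\delta w\,\partial_zv_2)\delta v$, together with transport terms in $v_1,w_1$. The transport terms combine, after integration by parts, into
\[
-\frac12\int(\partial_xv_1+\partial_zw_1)|\delta v|^2+\frac12\int_{\mathbb T}w_1|\delta v|^2\Big|_{z=0}^{z=1}=0,
\]
which vanishes by incompressibility, $\delta v=0$ at $z=1$ and $w_1=0$ at $z=0$. The remaining term containing $\delta w$ is handled anisotropically, using $|\delta w|\le\int_0^1|\partial_x\delta v|dz$:
\[
\Big|\int\delta w\,\partial_zv_2\,\delta v\Big|\le \|\partial_x\delta v\|_2\,\|\partial_zv_2\|_{L^2_zL^\infty_x}\|\delta v\|_{L^\infty_xL^2_z},
\]
and the last factor is controlled by $\|\delta v\|_2^{1/2}\|\delta v\|_{H^1}^{1/2}$. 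Young's inequality then gives
\[
\frac{d}{dt}\|\delta v\|_2^2\le C(1+\|v_2\|_{H^2}^2)^{2}\|\delta v\|_2^2 .
\]
Before applying Gr\"onwall we need the coefficient to be integrable in time, which it is not with $v_2$ only in $L^2H^2$. So we will replace the factor by the sharper anisotropic estimate $\|\partial_zv_2\|_{L^2_zL^\infty_x}\le C\|\partial_zv_2\|_2^{1/2}\|\partial_zv_2\|_{H^1}^{1/2}$. With this bound the coefficient is $C(1+\|v_2\|_{H^1}^2\|v_2\|_{H^2}^2)$, which lies in $L^1(0,T)$, and Gr\"onwall's inequality gives $\delta v\equiv0$.

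I expect the main obstacle to be the $w\partial_zv$ term, both in the $H^1$ estimate and in the uniqueness estimate. The anisotropic treatment that avoids integrating by parts in $z$ is the step I would check most carefully.
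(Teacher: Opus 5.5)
Your proposal is correct. For existence it takes a genuinely different route from the paper.

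The paper uses a contraction mapping. It sets $U=\mathcal{F}(v)$ as the solution of the linear problem $\partial_tU-\mu\Delta U=G(v)$ with the mixed boundary conditions, and linear parabolic $L^2$-theory places $U$ in $X_T$. The difference estimate $\|\mathcal{F}(v_1)-\mathcal{F}(v_2)\|_{X_T}\le C_0(MT^{1/4}+T^{1/2})\|v_1-v_2\|_{X_T}$ of Proposition \ref{minusloc}, combined with Proposition \ref{loc2M}, then gives a fixed point on a ball for $T_0=\min\{1,(4C_0M_0)^{-4}\}$.

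You instead run a Galerkin scheme in the eigenbasis adapted to the boundary conditions, close the one-sided inequality $y'\le C(1+y^3)+C\|K\|_2^2$, and pass to the limit with Lemma \ref{lemma7}, which the paper states but never uses.

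The analytic core is shared. In both arguments $w$ is dominated by $\int_0^1|\partial_xv|\,dz$ and a one-dimensional Gagliardo--Nirenberg inequality in $x$ is applied. So $w\partial_zv$ is estimated without integrating by parts in $z$, and the boundary term at $z=1$ never appears.

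\emph{What each route buys.} The fixed-point route gives existence, uniqueness within the ball, and Lipschitz dependence at once. It inherits $v\in C([0,T];H^1)$ from the linear theory and needs no limit passage. The cost is invoking maximal regularity for the mixed-boundary heat equation as a black box and proving a bilinear difference bound for $G$. Your route needs only a bound on the nonlinearity, not a difference estimate. It makes the $H^2$ control explicit (in this basis one even has $\|\nabla^2v_n\|_2=\|\Delta v_n\|_2$), and it mirrors the a priori estimates of Section \ref{localex3}. The cost is the compactness argument plus separate checks of the boundary conditions, time continuity, and the initial data.

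Your uniqueness argument is essentially the paper's $L^2$ energy estimate for the difference, with a different anisotropic splitting of the $\delta w$ term. Your detour through $\|\partial_zv_2\|_{L^2_zL^\infty_x}$ is unnecessary. Apply H\"older in $x$ with exponents $(2,2,\infty)$ to $\int_0^1|\partial_x\delta v|\,dz\in L^2_x$, $\|\partial_zv_2\|_{L^2_z}\in L^2_x$ and $\|\delta v\|_{L^2_z}\in L^\infty_x$. After Young's inequality this gives the bounded coefficient $\|\partial_zv_2\|_2^4$ directly.
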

We will apply the contractive mapping principle to prove this proposition.

Define the space $X_{T}:=\{\phi \in L^\infty(0,T;H^1)\cap L^2(0,T;H^2), \phi|_{z=1}=\partial_z\phi|_{z=0}=0\}$, and $\|f\|_{X_{T}}:=\|f\|_{L^\infty(0,T;H^1)}+\|f\|_{L^2(0,T;H^2)}$. Then, $X_{T}$ is a Banach space.

Define a map $\mathcal{F}$: $X_{T}\rightarrow X_{T}$ as follows. For any $v\in X_{T}$,
\begin{align*}
U=\mathcal{F}(v)
\end{align*}
is the unique solution to
\begin{align}\label{eqUloc}
&\partial_t U-\mu\triangle U=-v\partial_xv-w\partial_zv-\alpha v-\beta w-K=:G(v),
\end{align}
with $w$ given by \eqref{omegaas},
subject to
\begin{align}
\label{eqUbdd}
\begin{cases}
&U|_{z=1}=\partial_zU|_{z=0}=0,\\
&U\, \text{is periodic in}\, x,\\
&U|_{t=0}=v_0.
\end{cases}
\end{align}
Since $v\in X_{T}$ and $K\in L^2(0,T;L^2(\Omega))$, one can check easily that $G(v)\in L^2(0,T; L^2(\Omega))$. Then, the standard parabolic theory implies that
\begin{align}\label{estilocU}
&U\in C([0,T]; H^1(\Omega))\cap L^2(0,T;H^2(\Omega)),\quad\partial_tU\in L^2(0,T;L^2(\Omega)).
\end{align}
Therefore, $\mathcal{F}$ is well-defined on $X_T$. In order to show Proposition \ref{prolocal}, it suffices to find a fixed point of $\mathcal{F}$ in $X_{T}$.
\begin{proposition}
\label{minusloc}
Given arbitrary positive number $M\geq 1$ and time $T\leq 1$ and set $\mathbb{B}_{M,T}:=\left\{v\in X_{T}\Big| \|v\|_{X_{T}}\leq M\right\}$. Then, for any $v_1,v_2\in \mathbb{B}_{M,T}$, it holds that
\begin{align*}
\|\mathcal{F}(v_1)-\mathcal{F}(v_2)\|_{X_{T}}\leq
C_0\left(MT^\frac{1}{4}+T^\frac{1}{2}\right)\|v_1-v_2\|_{X_{T}},
\end{align*}
where $C_0$ is a positive constant depending only on $\mu$, $\alpha$, and $\beta$.
\end{proposition}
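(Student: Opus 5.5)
Set $v=v_1-v_2$, $w=w_1-w_2$ (with $w_i$ given by \eqref{omegaas}) and $U=\mathcal{F}(v_1)-\mathcal{F}(v_2)$. Then $U$ solves the linear heat equation $\partial_tU-\mu\Delta U=G(v_1)-G(v_2)$ with zero initial data and the boundary conditions \eqref{eqUbdd}; note that $K$ cancels. The standard parabolic estimate gives
\begin{align*}
\|U\|_{X_T}\le C\|G(v_1)-G(v_2)\|_{L^2(0,T;L^2)} .
\end{align*}
To see this, test with $-\Delta U$ and with $U$. The boundary terms vanish, since $U|_{z=1}=0$ implies $\partial_tU|_{z=1}=0$ and $\partial_zU|_{z=0}=0$. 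Elliptic regularity for the mixed Dirichlet/Neumann problem on $\mathbb{T}\times(0,1)$ then controls $\|U\|_{H^2}$. So it suffices to bound the $L^2_{t,x}$ norm of
\begin{align*}
G(v_1)-G(v_2)=-v\partial_xv_1-v_2\partial_xv-w\partial_zv_1-w_2\partial_zv-\alpha v-\beta w .
\end{align*}

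The linear terms are immediate. Since $\|w\|_2\le C\|\partial_xv\|_2$, we get
\begin{align*}
\|\alpha v+\beta w\|_{L^2(0,T;L^2)}\le CT^{1/2}\|v\|_{L^\infty(0,T;H^1)},
\end{align*}
which produces the $T^{1/2}$ term. The term $v\partial_xv_1$ is bounded in $L^2_x$ using $\|v\|_\infty\le C\|v\|_{H^1}^{1/2}\|v\|_{H^2}^{1/2}$ (2D Agmon) together with $\|\partial_xv_1\|_2\le M$. Integrating in time and applying H\"older gives $CMT^{1/4}\|v\|_{X_T}$. The same argument handles $v_2\partial_xv$, with the roles of $v$ and $v_2$ interchanged.

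The main obstacle is the two terms containing $w$, because $w$ loses one $x$-derivative and has only $L^\infty_z$ control from \eqref{omegaas}. For $w\partial_zv_1$ I will use the anisotropic estimate
\begin{align*}
\|w\partial_zv_1\|_2\le \Big\|\sup_z|w|\Big\|_{L^4_x}\,\Big\|\|\partial_zv_1\|_{L^2_z}\Big\|_{L^4_x}.
\end{align*}
Here $\sup_z|w|\le\int_0^1|\partial_xv|\,dz$, so the first factor is at most $\|\partial_xv\|_{L^2_zL^4_x}\le C\|v\|_{H^1}^{1/2}\|v\|_{H^2}^{1/2}$ by 1D Gagliardo--Nirenberg in $x$, exactly as in the proof of Lemma~\ref{lem1a}. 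Similarly, the second factor is at most $C\|v_1\|_{H^1}^{1/2}\|v_1\|_{H^2}^{1/2}$. The product is therefore bounded by $CM^{1/2}\|v_1\|_{H^2}^{1/2}\|v\|_{H^1}^{1/2}\|v\|_{H^2}^{1/2}$. Its $L^2(0,T)$ norm is estimated by H\"older in time with exponents $(4,4,\infty)$, which yields $CT^{0}\,M\|v\|_{X_T}$. This carries no power of $T$, so the exponents have to be rebalanced.

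To gain a power of $T$, I will instead put the $H^2$ norm entirely on one factor and use $L^\infty_t$ on the other. Concretely, bound $\|w\|_{L^\infty_z L^4_x}$ by $\|v\|_{H^1}^{3/4}\|v\|_{H^2}^{1/4}$ and $\|\partial_zv_1\|_{L^2_zL^4_x}$ similarly, via Gagliardo--Nirenberg with exponent $\frac14$: $\|f\|_{L^4_x}\le C\|f\|_{L^2_x}^{3/4}\|f\|_{H^1_x}^{1/4}$, taken in $L^2_z$, where $\partial_x$ of the factors is controlled by $H^2$ norms. The time integrand then carries $\|v_1\|_{H^2}^{1/4}\|v\|_{H^2}^{1/4}$, and H\"older gives
\begin{align*}
\int_0^T\|v_1\|_{H^2}^{1/2}\|v\|_{H^2}^{1/2}\,dt\le T^{1/2}M\|v\|_{X_T},
\end{align*}
so this term contributes $CMT^{1/4}\|v\|_{X_T}$. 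The term $w_2\partial_zv$ is handled identically, with $v$ and $v_2$ interchanged. Summing all contributions and using $T\le1$ gives the stated bound.
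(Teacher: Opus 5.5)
Your proposal is correct and follows essentially the paper's argument. You reduce, via the $U$ and $-\Delta U$ energy estimates plus elliptic regularity, to bounding $\|G(v_1)-G(v_2)\|_{L^2(0,T;L^2)}$, and you handle the $w$-terms through $\|\partial_x v_i\|_{L^2_zL^4_x}\|\partial_z v_j\|_{L^2_zL^4_x}$ with the one-dimensional Gagliardo--Nirenberg exponent $\frac14$. For the transport terms you use Agmon ($L^\infty\times L^2$) where the paper uses an $L^4\times L^4$ split, which is immaterial.

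One small slip in the time H\"older step: it should read $\int_0^T\|v_1\|_{H^2}^{1/2}\|v\|_{H^2}^{1/2}dt\le T^{1/2}M^{1/2}\|v\|_{X_T}^{1/2}$. Multiplied by the factor $M^{3/2}\|v\|_{X_T}^{3/2}$ from the $L^\infty_tH^1$ norms, this bounds the squared $L^2_t$ norm by $CM^2T^{1/2}\|v\|_{X_T}^2$, which gives your claimed $CMT^{1/4}\|v\|_{X_T}$. Also, the norm you need on $w$ is $L^4_xL^\infty_z$ rather than $L^\infty_zL^4_x$, but your bound through $\int_0^1|\partial_xv|\,dz$ already controls it.
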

\begin{proof}
Let $w_i$ be given by \eqref{omegaas} related to $v_i$, $i=1,2$.
Set $U=U_1-U_2$, $v=v_1-v_2$, and $w=w_1-w_2$. Then, it is clear that
$w$ satisfies \eqref{omegaas} and
\begin{align}
\label{eqlocminus1}
\begin{cases}
&\partial_tU-\mu\triangle U=G(v_1)-G(v_2),\\
&U|_{z=1}=\partial_zU|_{z=0}=0, \quad U \text{ is periodic in } x,\\
&U|_{t=0}=0.
\end{cases}
\end{align}
Note that
\begin{align}
\|f\|_{L_z^2L_x^4}=&\left\|\|f\|_{L^4_x}\right\|_{L^2_z}\leq C\left\|\|f\|_{L^2_x}^\frac{3}{4}\|f\|_{H^1_x}^\frac{1}{4}\right\|_{L^2_z}
\nonumber\\
\leq& C\left\|\|f\|_{L^2_x}\right\|_{L^2_z}^\frac{3}{4}
\left\|\|f\|_{H^1_x}\right\|_{L^2_z}^\frac{1}{4}\leq C\|f\|_2^\frac{3}{4}\|f\|_{H^1}^\frac{1}{4}.\label{fz2x4*}
\end{align}
Thanks to this and using \eqref{omegaas},
it follows from the H\"{o}lder, Gagliardo-Nirenberg and Minkowski inequalities that
\begin{align*}
&\|G(v_1)-G(v_2)\|_2=\left\|v_1\partial_xv+v\partial_xv_2+w_1\partial_zv+w\partial_zv_2+\alpha v+\beta w\right\|_2\\
\leq& \|v_1\|_4\|\partial_xv\|_4+\|v\|_4\|\partial_xv_2\|_4+\|w_1\|_{L_x^4L^\infty_z}\|\partial_z v\|_{L^4_xL_z^2}\\
&+\|w\|_{L_x^4L^\infty_z}\|\partial_z v_2\|_{L^4_xL_z^2}+C\|v\|_2+C\|w\|_2\\
\leq& \|v_1\|_4\|\partial_xv\|_4+\|v\|_4\|\partial_xv_2\|_4+\|\partial_xv_1\|_{L^4_xL^1_z}\|\partial_zv\|_{L^4_xL^2_z}\\
&+\|\partial_xv\|_{L^4_xL^1_z}\|\partial_zv_2\|_{L^4_xL^2_z}+C\|v\|_2+C\|\partial_xv\|_2\\
\leq& \|v_1\|_4\|\partial_xv\|_4+\|v\|_4\|\partial_xv_2\|_4+C\|\partial_xv_1\|_{L_z^2L^4_x}\|\partial_zv\|_{L^2_zL^4_x}\\
&+C\|\partial_xv\|_{L^2_zL^4_x}\|\partial_zv_2\|_{L^2_zL^4_x}+C\|v\|_2+C\|\partial_xv\|_2\\
\leq& C\|v_1\|_{H^1}\|v\|_{H^1}^\frac{1}{2}\| v\|_{H^2}^\frac{1}{2}+C\|v\|_{H^1}\|v_2\|_{H^1}^\frac{1}{2}\| v_2\|_{H^2}^\frac{1}{2}+C\|v_1\|_{H^2}^\frac{1}{4}\|v_1\|_{H^1}^\frac{3}{4}\|v\|_{H^2}^\frac{1}{4}\|v\|_{H^1}^\frac{3}{4}\\
&+C\|v\|_{H^2}^\frac{1}{4}\|v\|_{H^1}^\frac{3}{4}\|v_2\|_{H^2}^\frac{1}{4}\|v_2\|_{H^1}^\frac{3}{4}
+C\|v\|_{H^1},
\end{align*}
from which, recalling that $\|v_i\|_{X_T}\leq M$, one gets
\begin{align}
\int_0^{T}\|G(v_1)-G(v_2)\|_2^2dt\leq C\left(M^2T^\frac{1}{2}\|v\|_{X_{T}}^2+T\|v\|_{X_{T}}^2\right).\label{eqG1-2}
\end{align}
Therefore, multiplying $\eqref{eqlocminus1}_1$ by $U-\triangle U$ and integration by part, it follows from the H\"{o}lder inequality that
\begin{align*}
&\frac{1}{2}\frac{d}{dt}\left(\|U\|_2^2+\|\nabla U\|_2^2\right)+\mu\left(\|\nabla U\|_2^2+\|\triangle U\|_2^2\right)\\
\leq&\|G(v_1)-G(v_2)\|_2\|U\|_2+\|G(v_1)-G(v_2)\|_2\|\triangle U\|_2\\
\leq& \frac{\mu}{2}\|\triangle U\|_2^2+C\left(\|G(v_1)-G(v_2)\|_2^2+\|U\|_2^2\right),
\end{align*}
from which, by the Gr\"{o}nwall inequality and \eqref{eqG1-2}, one obtains
\begin{align*}
\|U\|_{X_{T}}^2\leq Ce^{CT}\int_0^{T}\|G_1(v_1)-G_2(v_2)\|_2^2dt\leq C \left(M^2T^\frac{1}{2}+T\right)\|v\|_{X_{T}}^2,
\end{align*}
proving the conclusion.
\end{proof}
\begin{proposition}\label{loc2M}
Let $C_0$ be the constant as in Proposition \ref{minusloc}. Then, there exists a positive constant $M_0$ depending only $v_0$ and $K$ such that $\|\mathcal{F}(v)\|_{X_{T_0}}\leq 2M_0$ for any $\|v\|_{X_{T_0}}\leq 2M_0$, where $T_0=\min\left\{1, \frac{1}{(4C_0M_0)^4}\right\}$.
\end{proposition}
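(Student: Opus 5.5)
The approach is to split $\mathcal{F}(v)=\mathcal{F}(0)+\big(\mathcal{F}(v)-\mathcal{F}(0)\big)$. The first piece will be controlled by the data alone. The second piece will be handled by the difference estimate of Proposition \ref{minusloc}, applied with $v_1=v$, $v_2=0$.

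\textbf{Step 1: the data term.} Let $U_0=\mathcal{F}(0)$. This is the solution of \eqref{eqUloc}--\eqref{eqUbdd} with right-hand side $G(0)=-K$, since $v=0$ gives $w=0$. We multiply by $U_0-\Delta U_0$ and integrate by parts exactly as in the proof of Proposition \ref{minusloc}. The boundary terms vanish thanks to $U_0|_{z=1}=\partial_zU_0|_{z=0}=0$ and periodicity in $x$. Using elliptic regularity to bound $\|U_0\|_{H^2}$ by $\|\Delta U_0\|_2+\|U_0\|_2$ for these mixed boundary conditions, Gr\"onwall gives, for $T\le1$,
\begin{align*}
\|U_0\|_{X_T}\le C_1\Big(\|v_0\|_{H^1}+\|K\|_{L^2(0,1;L^2)}\Big).
\end{align*}
Here $C_1$ depends only on $\mu$, $\alpha$, $\beta$. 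We then define
\begin{align*}
M_0:=\max\Big\{1,\;C_1\big(\|v_0\|_{H^1}+\|K\|_{L^2(0,1;L^2)}\big)\Big\},
\end{align*}
which depends only on $v_0$ and $K$. This gives $\|\mathcal{F}(0)\|_{X_{T}}\le M_0$ for every $T\le1$.

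\textbf{Step 2: the difference term.} Take $\|v\|_{X_{T_0}}\le 2M_0$. Proposition \ref{minusloc} applies with $M=2M_0\ge1$ and $T=T_0\le1$; note $0\in\mathbb{B}_{M,T}$. It gives
\begin{align*}
\|\mathcal{F}(v)-\mathcal{F}(0)\|_{X_{T_0}}\le C_0\big(2M_0T_0^{1/4}+T_0^{1/2}\big)\,2M_0 .
\end{align*}
With $T_0\le (4C_0M_0)^{-4}$ we have $T_0^{1/4}\le (4C_0M_0)^{-1}$. So the first part contributes at most $C_0\cdot 2M_0\cdot\frac{1}{4C_0M_0}\cdot 2M_0=M_0$.

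\textbf{Main obstacle: the $T_0^{1/2}$ term.} This is the delicate point, since it adds $2C_0M_0T_0^{1/2}$ and would break the constant $2M_0$. Two remedies are available.

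\begin{itemize}
\item Enlarge $C_0$ at the outset, say $C_0\ge1$, which is harmless since Proposition \ref{minusloc} holds for any larger constant. Then $T_0^{1/2}\le T_0^{1/4}\cdot(4C_0M_0)^{-1}\le\frac14 T_0^{1/4}$, so the $T_0^{1/2}$ term is absorbed into the $MT_0^{1/4}$ term with a small factor.
\item Alternatively, bound the whole factor $C_0(2M_0T_0^{1/4}+T_0^{1/2})\le \frac12+\frac{1}{16 C_0M_0^2}$, and take $M_0$ in Step 1 with a harmless extra factor so that the total is at most $\frac12$.
\end{itemize}

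Either way we obtain $\|\mathcal{F}(v)-\mathcal{F}(0)\|_{X_{T_0}}\le M_0$. Combined with Step 1, this yields $\|\mathcal{F}(v)\|_{X_{T_0}}\le 2M_0$, as claimed.
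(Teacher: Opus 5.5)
\textbf{Gap in the final step.} Your splitting $\mathcal{F}(v)=\mathcal{F}(0)+(\mathcal{F}(v)-\mathcal{F}(0))$ and your Step 1 match the paper. You are also right that Proposition \ref{minusloc} has to be applied with radius $M=2M_0$. The gap is the last step: neither remedy gives $\|\mathcal{F}(v)-\mathcal{F}(0)\|_{X_{T_0}}\le M_0$.

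Once $4C_0M_0\ge 1$ (in particular when $C_0\ge 1$), you have exactly $T_0=(4C_0M_0)^{-4}$. Then the $MT_0^{1/4}$ part already uses the whole budget, since $4C_0M_0^2T_0^{1/4}=M_0$. What Proposition \ref{minusloc} actually gives is $M_0+2C_0M_0T_0^{1/2}=M_0+\frac{1}{8C_0M_0}$, which is strictly larger than $M_0$.
\begin{itemize}
\item In Remedy 1, absorbing $T_0^{1/2}\le\frac14T_0^{1/4}$ produces $2C_0M_0\,(2M_0+\frac14)\,T_0^{1/4}=M_0+\frac18$, not $M_0$. There is no room left in the $MT_0^{1/4}$ term to absorb anything.
\item In Remedy 2, your factor $\frac12+\frac{1}{16C_0M_0^2}$ is strictly larger than $\frac12$ for every $M_0$. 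Enlarging $M_0$ also shrinks $T_0$, and $2C_0M_0T_0^{1/4}$ stays exactly $\frac12$, so no choice of $M_0$ brings the factor down to $\frac12$.
\end{itemize}
As written, you only obtain $\|\mathcal{F}(v)\|_{X_{T_0}}\le 2M_0+\frac{1}{8C_0M_0}$.

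\textbf{How to repair it.} Since $T_0$ is prescribed by the statement, the slack has to go into the data term, not into the contraction factor. That is where your ``harmless extra factor'' belongs. Set $M_0:=\max\{1,\,2C_1(\|v_0\|_{H^1}+\|K\|_{L^2(0,1;L^2)})\}$, so that $\|\mathcal{F}(0)\|_{X_{T_0}}\le M_0/2$.
\begin{itemize}
\item If $4C_0M_0\ge1$, then $\|\mathcal{F}(v)-\mathcal{F}(0)\|_{X_{T_0}}\le M_0+\frac{1}{8C_0M_0}\le M_0+\frac12\le\frac32M_0$.
\item If $4C_0M_0<1$, then $T_0=1$, and the difference is at most $4C_0M_0^2+2C_0M_0<M_0+\frac12\le\frac32M_0$.
\end{itemize}
In both cases $\|\mathcal{F}(v)\|_{X_{T_0}}\le 2M_0$.

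\textbf{Comparison with the paper.} The paper uses the same splitting with $\|\mathcal{F}(0)\|_{X_1}\le M_0$. However, it inserts $M=M_0$ rather than $2M_0$ into Proposition \ref{minusloc}, obtaining $M_0+2C_0M_0^2T_0^{1/4}+2C_0M_0T_0^{1/2}\le M_0+4C_0M_0^2T_0^{1/4}$. The spare $2C_0M_0^2T_0^{1/4}$ that absorbs the $T_0^{1/2}$ term exists only because of that radius. With the correct radius, the paper's argument hits exactly the obstruction you noticed, and the same enlargement of $M_0$ repairs it.
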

\begin{proof}
The standard theory of parabolic equations provides that there exists a constant $M_0\geq 1$, depending only on $v_0$ and $K$, such that
\begin{align*}
\|\mathcal{F}(0)\|_{X_1}\leq M_0.
\end{align*}
Let $C_0$ be the constant stated in Proposition \ref{minusloc} and denote $T_0=\min\left\{1,\frac{1}{(4C_0M_0)^4}\right\}$. Take arbitrary $v\in X_{T_0}$ satisfying $\|v\|_{X_{T_0}}\leq 2M_0$.
Note that $\|\mathcal{F}(0)\|_{X_{T_0}}\leq \|\mathcal{F}(0)\|_{X_1}\leq M_0$. It follows from Proposition \ref{minusloc} that
\begin{align*}
\|\mathcal{F}(v)\|_{X_{T_0}}\leq & \|\mathcal{F}(0)\|_{X_{T_0}}+\|\mathcal{F}(v)-\mathcal{F}(0)\|_{X_{T_0}}
\leq M_0+C_0\left(M_0T_0^\frac{1}{4}+T_0^\frac{1}{2}\right)\|v\|_{X_{T_0}}\\
\leq& M_0+2C_0M_0^2T_0^\frac{1}{4}+2C_0M_0T_0^\frac{1}{2}\leq M_0+4C_0M_0^2T_0^\frac{1}{4}\leq 2M_0,
\end{align*}
proving the conclusion.
\end{proof}
\begin{proof}
\bf The proof of Proposition \ref{prolocal}. \rm
{\bf Existence.}
Let $M_0$ and $T_0$ be as in Proposition \ref{loc2M} and denote $\mathbb{B}_{2M_0, T_0}=\left\{v\in X_{T_0}\left|\|v\|_{X_{T_0}}\leq 2M_0\right.\right\}$. Then it follows from Proposition \ref{minusloc} and Proposition \ref{loc2M} that
\begin{align*}
&\mathcal{F}:\mathbb{B}_{2M_0, T_0}\rightarrow \mathbb{B}_{2M_0, T_0},\\
&\|\mathcal{F}(v_1)-\mathcal{F}(v_2)\|_{X_{T_0}}\leq \frac{1}{2}\|v_1-v_2\|_{X_{T_0}},
\end{align*}
for any $v_1, v_2\in \mathbb{B}_{2M_0, T_0}$. Therefore, $\mathcal{F}$ is a contractive mapping on $\mathbb{B}_{2M_0, T_0}$. By the contractive mapping principle, there is a unique fixed point $v$ of $\mathcal{F}$ on $\mathbb{B}_{2M_0, T_0}$, which by definition of $\mathcal{F}$ is a solution to system \eqref{eqpeini}, subject to \eqref{eq5}--\eqref{eq7}. The desired regularities of $v$ follow from
\eqref{estilocU}.
%\end{proof}
%\subsection{continuous with initial data}\label{conlo}
%\begin{proof}

{\bf Uniqueness.}
Let $v_1$ and $v_2$ be two strong solutions to system \eqref{eqpeini}, subject to \eqref{eq5}--\eqref{eq7}, with the same initial data. Let $w_i$ be given by \eqref{omegaas} related to $v_i$, $i=1,2$. Denote $v=v_1-v_2$ and $w=w_1-w_2$. Then, it is clear that
\begin{align}
\label{locmi1}
\begin{cases}
&\partial_tv-\mu\triangle v+\alpha v+\beta w+v_1\partial_xv+v\partial_xv_2+w_1\partial_zv+w\partial_zv_2=0,\\
&v|_{z=1}=\partial_zv|_{z=0}=0,w|_{z=0}=0,\\
&v,w \,\text{ are periodic in }\, x,\\
&v|_{t=0}=0.
\end{cases}
\end{align}
Multiplying $\eqref{locmi1}_1$ with $v$, due to the boundary conditions $\eqref{locmi1}_2$, and using the Minkowski, H\"{o}lder, Gagliardo-Nirenberg, and Young inequalities, it follows from integrating by part and \eqref{fz2x4*} that
\begin{align*}
&\frac{1}{2}\frac{d}{dt}\|v\|_2^2+\mu\|\nabla v\|_2^2
=  -\int_\Omega \left(\alpha v+\beta w+v\partial_xv_2+w\partial_zv_2\right)vdxdz\\
\leq &
C\left(\|v\|_2^2+\|w\|_2\|v\|_2+\|v\|_4^2\|\partial_xv_2\|_2+\|w\|_{L^\infty_zL^2_x}\|\partial_zv_2\|_{L^2_zL^4_x}\|v\|_{L^2_zL^4_x}\right)\\
\leq&
C\left(\|v\|_2^2+\|\partial_xv\|_2\|v\|_2+\|v\|_4^2\|\partial_xv_2\|_2+\|\partial_xv\|_{2}\|\partial_zv_2\|_{L^2_zL^4_x}\|v\|_{L^2_zL^4_x}\right)\\
\leq & C\left(\|v\|_2^2+\|v\|_2\|\partial_xv\|_2+\|v\|_2\| v\|_{H^1}\|\partial_xv_2\|_2+\|\partial_xv\|_2\|v\|_{H^1}^\frac{1}{4}\|v\|_2^\frac{3}{4}\|\partial_zv_2\|_2^\frac{3}{4}\|\partial_zv_2\|_{H^1}^\frac{1}{4}\right)\\
\leq& \frac{\mu}{2}\|\nabla v\|_2^2+C\left(1+\|\partial_xv_2\|_2^2+\|\partial_zv_2\|_2^2\|\partial_zv_2\|_{H^1}^\frac{2}{3}\right)\|v\|_2^2,
\end{align*}
from which, the conclusion follows by applying the Gr\"{o}nwall inequality.
\end{proof}
\section{A priori estimates}\label{localex3}
Throughout this section, we assume that $v$ is a strong solution to system \eqref{eqpeini}, subject to \eqref{eq5}--\eqref{eq7}, on the time interval $(0, T)$, for a finite positive time $T$.
\begin{proposition}
\label{provL2}
It holds that
\begin{align*}
\sup_{0\leq t\leq T}\|v\|_2^2+\int_0^T\|\nabla v\|_2^2dt\leq C,
\end{align*}
where $C$ is a positive constant depending only $\|v_0\|_2$, $\alpha$, $\beta$, $\mu$, $\|K\|_{L^2(0, T;L^2)}$, and $T$.
\end{proposition}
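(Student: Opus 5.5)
We take the $L^2$ inner product of $\eqref{eqpeini}_1$ with $v$ and integrate over $\Omega$, i.e.\ perform the basic energy estimate. Since $v$ is a strong solution in the sense of Definition \ref{def1}, all terms are well defined and $\frac12\frac{d}{dt}\|v\|_2^2=\int_\Omega \partial_t v\, v\,dxdz$ holds for a.e.\ $t$.

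\textbf{Step 1: dissipation.} We integrate $-\mu\int_\Omega\Delta v\, v\,dxdz$ by parts. The boundary contributions vanish: in $x$ by periodicity, at $z=1$ because $v|_{z=1}=0$, and at $z=0$ because $\partial_z v|_{z=0}=0$. This term therefore equals $\mu\|\nabla v\|_2^2$.

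\textbf{Step 2: the nonlinear term.} We write
\begin{align*}
\int_\Omega (v\partial_x v+w\partial_z v)v\,dxdz=\frac12\int_\Omega (v\partial_x+w\partial_z)(v^2)\,dxdz .
\end{align*}
Integrating by parts and using $\partial_xv+\partial_zw=0$, this reduces to the boundary term $\frac12\int_{\mathbb T} w v^2\,dx\big|_{z=0}^{z=1}$. It vanishes since $v|_{z=1}=0$ and $w|_{z=0}=0$. This is the only point where the one-sided condition on $w$ matters, and here it suffices: $v$ vanishes exactly on the side where $w$ is uncontrolled. So the nonlinearity drops out entirely, and this is not a real obstacle for the $L^2$ level.

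\textbf{Step 3: linear and forcing terms.} By \eqref{omegaas} and the Minkowski inequality, $|w(x,z)|\le\int_0^1|\partial_xv(x,\xi)|\,d\xi$, hence $\|w\|_2\le\|\partial_xv\|_2$. Then
\begin{align*}
\left|\int_\Omega(\alpha v+\beta w+K)v\,dxdz\right|\le |\alpha|\|v\|_2^2+|\beta|\|\nabla v\|_2\|v\|_2+\|K\|_2\|v\|_2
\le\frac{\mu}{2}\|\nabla v\|_2^2+C\|v\|_2^2+\|K\|_2^2 ,
\end{align*}
where the last step uses Young's inequality.

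\textbf{Step 4: conclusion.} Combining Steps 1--3 gives
\begin{align*}
\frac{d}{dt}\|v\|_2^2+\mu\|\nabla v\|_2^2\le C\|v\|_2^2+2\|K\|_2^2 ,
\end{align*}
with $C$ depending only on $\alpha,\beta,\mu$. Applying the Gr\"onwall inequality on $(0,T)$ and then integrating in time yields the stated bound, with a constant depending only on $\|v_0\|_2$, $\alpha$, $\beta$, $\mu$, $\|K\|_{L^2(0,T;L^2)}$ and $T$.

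The only step needing care is checking that the boundary terms in Steps 1 and 2 vanish under \eqref{eq5}; everything else is routine.
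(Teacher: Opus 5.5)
Your proof is correct and follows the same route as the paper. You test the equation with $v$, let the transport term vanish via $\partial_xv+\partial_zw=0$ together with $v|_{z=1}=0$ and $w|_{z=0}=0$, bound $\|w\|_2\le\|\partial_xv\|_2$, absorb with Young's inequality, and close with Gr\"onwall. Your version also spells out the cancellation of the boundary term $\frac12\int_{\mathbb T}wv^2\,dx\big|_{z=0}^{z=1}$, which the paper leaves implicit in ``integrating by parts.''
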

\begin{proof}
Multiplying $\eqref{eqpeini}_1$ by $v$, and using the boundary conditions \eqref{eq5}--\eqref{eq6}, it follows from integrating by parts, the H\"{o}lder and Young inequalities that
\begin{align*}
\frac{1}{2}\frac{d}{dt}\|v\|_2^2+\mu\|\nabla v\|_2^2\leq & |\alpha|\|v\|_2^2+|\beta|\|\partial_x v\|_2\|v\|_2+\|K\|_2\|v\|_2\\
\leq& \frac{\mu}{2}\|\nabla v\|_2^2+C\left(\|v\|_2^2+\|K\|_2^2\right),
\end{align*}
from which, the conclusion follows by applying the Gr\"{o}nwall inequality.
\end{proof}
Following the idea in Li-Wang \cite{LiWangz},
before establishing the $L^\infty_tH^1$ estimate for $v$, we carry out the $L^\infty_tL^4$ and $L^\infty_tL^2$ estimates for $v$ and $\omega$, respectively.
\begin{proposition}
\label{provL4}
It holds that
\begin{align*}
\sup_{0\leq t\leq T}\|v\|_4^4+\int_0^T\left\||v|\nabla v\right\|_2^2dt\leq C,
\end{align*}
where $C$ is a positive constant depending only $\|v_0\|_2$, $\|v_0\|_{4}$, $\alpha$, $\beta$, $\mu$, $\|K\|_{L^2(0,T; L^2)}$, and $T$.
\end{proposition}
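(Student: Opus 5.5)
The plan is to test $\eqref{eqpeini}_1$ with $v^3$ and close a Gr\"onwall inequality for $\|v\|_4^4$. The inputs are the $L^\infty_tL^2\cap L^2_tH^1$ bound of Proposition \ref{provL2} and the two interpolation inequalities of Lemma \ref{lem1a}.

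\textbf{Transport and diffusion terms.} I expect the transport terms to drop out exactly. Indeed, $\int_\Omega (v\partial_xv+w\partial_zv)v^3\,dxdz=\frac14\int_\Omega (v\partial_x+w\partial_z)(v^4)\,dxdz$. Integrating by parts, using $\partial_xv+\partial_zw=0$ and periodicity in $x$, leaves only the boundary term $\frac14\int_{\mathbb T}wv^4\,dx\big|_{z=0}^{z=1}$. This vanishes because $v|_{z=1}=0$ and $w|_{z=0}=0$. This is exactly where the one-sided boundary condition on $w$ is harmless: the weight $v^4$ kills the upper boundary. For the diffusion term, $-\mu\int_\Omega\Delta v\,v^3=3\mu\int_\Omega v^2|\nabla v|^2$. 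Its boundary terms vanish by $v|_{z=1}=0$ and $\partial_zv|_{z=0}=0$. Hence
\begin{align*}
\frac14\frac{d}{dt}\|v\|_4^4+3\mu\big\||v|\nabla v\big\|_2^2=-\alpha\|v\|_4^4-\beta\int_\Omega wv^3\,dxdz-\int_\Omega Kv^3\,dxdz .
\end{align*}

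\textbf{Forcing term.} By the second inequality of Lemma \ref{lem1a},
$$\Big|\int_\Omega Kv^3\Big|\le \|K\|_2\|v^3\|_2\le C\|K\|_2\big(\||v|\nabla v\|_2^{1/2}+\|v\|_4\big)\|v\|_4^2 .$$
Young's inequality bounds this by
$$\varepsilon\||v|\nabla v\|_2^2+C\big(\|K\|_2^{4/3}+\|K\|_2\big)(1+\|v\|_4^4).$$
The coefficient $\|K\|_2^{4/3}+\|K\|_2$ lies in $L^1(0,T)$.

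\textbf{Main obstacle: the term $\beta\int wv^3$.} Since $w$ costs a full derivative of $v$, this is the hardest term. From \eqref{omegaas} we have $\|w\|_{L^2_xL^\infty_z}\le\|\partial_xv\|_2$. H\"older then gives $|\int wv^3|\le\|\partial_xv\|_2\|v^3\|_{L^2_xL^1_z}$, and I bound the last factor by the first inequality of Lemma \ref{lem1a}. The worst resulting product is
$$C\|\nabla v\|_2^{5/4}\||v|\nabla v\|_2^{1/4}\|v\|_4^{3/2}\|v\|_2^{3/4}.$$
By Young with exponents $8$ and $8/7$, this is at most
$$\varepsilon\||v|\nabla v\|_2^2+C\|v\|_2^{6/7}\|\nabla v\|_2^{10/7}\|v\|_4^{12/7}\le \varepsilon\||v|\nabla v\|_2^2+C(1+\|\nabla v\|_2^2)(1+\|v\|_4^4).$$
Here I used $\sup_t\|v\|_2\le C$ from Proposition \ref{provL2}. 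The lower-order pieces coming from $\|v\|_4^{1/2}$ and $\|v\|_2^{1/4}$ in Lemma \ref{lem1a} are handled the same way.

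\textbf{Conclusion.} Choose $\varepsilon$ small relative to $\mu$ and absorb all $\varepsilon$-terms into the left-hand side. This yields
$$\frac{d}{dt}\|v\|_4^4+\mu\||v|\nabla v\|_2^2\le C\big(1+\|\nabla v\|_2^2+\|K\|_2^2\big)(1+\|v\|_4^4).$$
The coefficient is integrable on $(0,T)$ by Proposition \ref{provL2} and the assumption on $K$. Gr\"onwall's inequality then gives the claimed bound in terms of $\|v_0\|_4$, $\|v_0\|_2$, $\alpha,\beta,\mu$, $\|K\|_{L^2(0,T;L^2)}$ and $T$.
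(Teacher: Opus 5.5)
Your proposal is correct and follows the paper's proof essentially step for step. Both arguments test with $|v|^2v$, bound $\|w\|_{L^2_xL^\infty_z}\le\|\partial_xv\|_2$, apply both inequalities of Lemma \ref{lem1a}, absorb the $\||v|\nabla v\|_2$ terms via Young, and close with Gr\"onwall using Proposition \ref{provL2}. The only cosmetic difference is that you use $\sup_t\|v\|_2\le C$ up front, whereas the paper keeps the factor $\|\partial_xv\|_2^2\|v\|_2^2$ in the Gr\"onwall coefficient.
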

\begin{proof}
Multiplying $\eqref{eqpeini}_1$ by $|v|^2v$, using the boundary conditions \eqref{eq5}--\eqref{eq6}, it follows from integrating by parts, Lemma \ref{lem1a}, and the Minkowski, H\"{o}lder, Gagliardo-Nirenberg, and Young inequalities that
\begin{align*}
&\frac{1}{4}\frac{d}{dt}\|v\|_4^4+3\mu\left\||v|\nabla v\right\|_2^2=-\int_\Omega\left(\alpha v+\beta w+K\right)|v|^2vdxdz\\
\leq & |\alpha|\|v\|_4^4+|\beta|\|w\|_{L^2_xL^\infty_z}\left\|v^3\right\|_{L^2_xL^1_z}+\|K\|_2\left\|v^3\right\|_2\\
\leq & |\alpha|\|v\|_4^4+|\beta|\|\partial_xv\|_2\left\|v^3\right\|_{L^2_xL^1_z}+\|K\|_2\left\|v^3\right\|_2\\
\leq& C\|v\|_4^4+C\|\partial_xv\|_2\left(\|v\partial_xv\|_2^\frac{1}{4}+\left\|v\right\|_4^\frac{1}{2}\right)\|v\|_4^\frac{3}{2}\left(\|\partial_xv\|_2^\frac{1}{4}+\|v\|_2^\frac{1}{4}\right)\|v\|_2^\frac{3}{4}
\\
&+C\|K\|_2\left(\|v\nabla v\|_2^\frac{1}{2}+\|v\|_4\right)\|v\|_4^2\\
\leq& C\|v\|_4^4+C\|v\partial_x v\|_2^\frac{1}{4}\|v\|_4^\frac{3}{2}\|v\|_2^\frac{3}{4}\|\partial_xv\|_2^\frac{3}{4}\|\partial_xv\|_2^\frac{1}{4}\left(\|\partial_xv\|_2+\|v\|_2\right)^\frac{1}{4}\\
&+C\|\partial_xv\|_2\|v\|_4^2\left(\|\partial_xv\|_2^\frac{1}{4}+\|v\|_2^\frac{1}{4}\right)\|v\|_2^\frac{3}{4}
+C\|K\|_2\left(\|v\nabla v\|_2^\frac{1}{2}+\|v\|_4\right)\|v\|_4^2\\
\leq& \mu\|v\nabla v\|_2^2+C\left(\|\partial_x v\|_2^2\|v\|_2^2+\|\partial_xv\|_2^2+\|K\|_2^2+1\right)\|v\|_4^4+C\left(\|v\|_{H^1}^2+\|K\|_2^2+1\right),
\end{align*}
which implies the conclusion by applying the Gr\"{o}nwall inequality and Proposition \ref{provL2}.
\end{proof}
Recalling \eqref{omegaas}, i.e.,
$
\displaystyle w=-\int_0^z\partial_xv(x,\xi,t)d\xi.
$
One can verify from \eqref{eqpeini} by direct calculations that
\begin{align}
\label{eqomega1}
\begin{cases}
&\partial_tw-\mu \triangle w+2\int_0^z\partial_x(v\partial_xv)d\xi+\partial_x(w v)+\alpha w+\beta \int_0^z\partial_xw d\xi+\int_0^zK_xd\xi=0,\\
&w|_{z=0}=0, \,\partial_zw|_{z=1}=0,\, w \text{ is periodic in }x, \, w|_{t=0}=w_0.
\end{cases}
\end{align}
\begin{proposition}
\label{proomegaL2}
It holds that
\begin{align*}
\sup_{0\leq t\leq T}\|w\|_2^2+\int_0^T\|\nabla w\|_2^2dt\leq C,
\end{align*}
where $C$ is a positive constant depending only $\|v_0\|_2$, $\|v_0\|_{4}$, $\|w_0\|_2$, $\alpha$, $\beta$, $\mu$, $\|K\|_{L^2(0, T;L^2)}$, and $T$.
\end{proposition}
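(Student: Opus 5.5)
The plan is to perform the basic $L^2$ energy estimate on the dynamical equation \eqref{eqomega1} for $w$. Every nonlinear and forcing term will be controlled using only quantities already bounded in Propositions \ref{provL2} and \ref{provL4}, namely $\|v\|_4$ and $\int_0^T\||v|\nabla v\|_2^2dt$. Note first that $w_0=-\int_0^z\partial_xv_0d\xi\in L^2$ because $v_0\in H^1$.

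\textbf{Boundary terms.} The boundary conditions in \eqref{eqomega1} are consistent. Indeed $\partial_zw=-\partial_xv$, and $\partial_xv|_{z=1}=0$ because $v|_{z=1}=0$. Together with $w|_{z=0}=0$ and periodicity in $x$, multiplying \eqref{eqomega1}$_1$ by $w$ and integrating by parts gives $-\mu\int_\Omega\Delta w\,w=\mu\|\nabla w\|_2^2$ with no boundary contributions. Since $w|_{z=0}=0$, the Poincar\'e inequality also gives $\|w\|_{H^1}\le C\|\nabla w\|_2$.

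\textbf{Bounding the terms.}
\begin{itemize}
\item The nonlocal quadratic term is written as $2\partial_x\int_0^zv\partial_xv\,d\xi$ and integrated by parts in $x$. Using $\left|\int_0^zv\partial_xv\,d\xi\right|\le\|v\partial_xv\|_{L^2_z}$, it is bounded by
\[
2\|v\nabla v\|_2\|\partial_xw\|_2\le\frac{\mu}{8}\|\nabla w\|_2^2+C\||v|\nabla v\|_2^2 ,
\]
whose time integral is controlled by Proposition \ref{provL4}.
\item For the transport term, integrate by parts in $x$ and combine H\"older with the 2D Gagliardo--Nirenberg inequality $\|w\|_4\le C\|w\|_2^{1/2}\|w\|_{H^1}^{1/2}$:
\[
\left|\int_\Omega wv\,\partial_xw\right|\le\|v\|_4\|w\|_4\|\partial_xw\|_2\le C\|v\|_4\|w\|_2^{1/2}\|\nabla w\|_2^{3/2}\le\frac{\mu}{8}\|\nabla w\|_2^2+C\|v\|_4^4\|w\|_2^2 .
\]
\item The linear terms satisfy $|\alpha|\|w\|_2^2+|\beta|\|\partial_xw\|_2\|w\|_2$, which is absorbed after Young's inequality.
\item For the forcing term, write $\int_0^zK_x\,d\xi=\partial_x\int_0^zK\,d\xi$ and integrate by parts in $x$. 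This needs no derivative of $K$ and yields the bound $\|K\|_2\|\partial_xw\|_2\le\frac{\mu}{8}\|\nabla w\|_2^2+C\|K\|_2^2$.
\end{itemize}

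\textbf{Conclusion.} Summing up gives
\[
\frac{d}{dt}\|w\|_2^2+\mu\|\nabla w\|_2^2\le C\left(1+\|v\|_4^4\right)\|w\|_2^2+C\left(\||v|\nabla v\|_2^2+\|K\|_2^2\right).
\]
The Gr\"onwall inequality then gives the claim, since $\sup_t\|v\|_4^4$ and $\int_0^T\||v|\nabla v\|_2^2dt$ are bounded by Proposition \ref{provL4}.

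\textbf{Main obstacle.} The delicate point is rigor rather than the estimates. For a strong solution $v$ with $K\in L^2(0,T;L^2)$ only, \eqref{eqomega1} holds in a distributional sense, with $\partial_tw=-\int_0^z\partial_x\partial_tv\,d\xi\in L^2(0,T;H^{-1})$, while $w\in L^2(0,T;H^1)$ since $v\in L^2(0,T;H^2)$. I would handle this in one of two ways:
\begin{itemize}
\item derive the energy identity directly in weak form, testing the $v$-equation with $-\int_z^1\partial_xw\,d\xi$, which is equivalent to testing \eqref{eqomega1} with $w$ after the $z$-integration by parts, and use the Lions--Magenes lemma for $\frac{d}{dt}\|w\|_2^2$;
\item or mollify $K$ and $v_0$, obtain the estimate for smooth solutions, and pass to the limit using the uniqueness from Proposition \ref{prolocal}.
\end{itemize}
All the integrations by parts above are in $x$ or in the Laplacian term only, so the weak formulation should suffice.
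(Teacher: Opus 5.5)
Your proposal is correct and is essentially the paper's proof. The paper also tests \eqref{eqomega1} with $w$, bounds the quadratic, transport, linear and forcing terms by $\|v\partial_xv\|_2\|\partial_xw\|_2$, $\|v\|_4\|w\|_4\|\partial_xw\|_2$, $|\beta|\|w\|_2\|\partial_xw\|_2$ and $\|K\|_2\|\partial_xw\|_2$, absorbs them with Gagliardo--Nirenberg and Young, and closes with Gr\"onwall using Propositions \ref{provL2} and \ref{provL4}.

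Your additions go beyond what the paper writes down: the check of the boundary terms, the explicit $x$-integration by parts for $\int_0^zK_x\,d\xi$, and the justification of the energy identity when only $K\in L^2(0,T;L^2)$.
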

\begin{proof}
Multiplying $\eqref{eqomega1}_1$ by $w$, it follows from integrating by parts, Lemma \ref{lem1a}, and the Minkowski, H\"{o}lder, Gagliardo-Nirenberg, and Young inequalities that
\begin{align*}
&\frac{1}{2}\frac{d}{dt}\|w\|_2^2+\mu\|\nabla w\|_2^2\\
=&-\int_\Omega\left(2\int_0^z\partial_x(v\partial_xv)d\xi+\partial_x(w v)+\alpha w+\beta \int_0^z\partial_xw d\xi+\int_0^zK_xd\xi\right)wdxdz\\
\leq & C\|v\partial_xv\|_2\|\partial_xw\|_2+|\alpha|\|w\|_2^2+|\beta|\|\partial_xw\|_2\|w\|_2+\|K\|_2\|\partial_xw\|_2
+\|w\|_4\|v\|_4\|\partial_xw\|_2\\
\leq & C\left(\|v\partial_xv\|_2\|\partial_xw\|_2+\|w\|_2^2+\|\partial_xw\|_2\|w\|_2+\|w\|_2^\frac{1}{2}\|w\|_{H^1}^\frac{1}{2}\|v\|_4\|\partial_x w\|_2\right)+\|K\|_2\|\partial_xw\|_2
\\
\leq& \frac{\mu}{2}\|\nabla w\|_2^2+C\left(1+\|v\|_4^4\right)\|w\|_2^2+C\left(\|v\partial_xv\|_2^2+\|K\|_2^2\right),
\end{align*}
from which, due to Proposition \ref{provL2} and Proposition \ref{provL4}, the conclusion follows by applying the Gr\"{o}nwall inequality.
\end{proof}
\begin{proposition}
\label{prouL2}
It holds
\begin{align*}
\sup_{0\leq t\leq T}\|\nabla v\|_2^2+\int_0^T\|\nabla^2 v\|_2^2dt\leq C,
\end{align*}
where $C$ is a positive constant depending only $\|v_0\|_{H^1}$, $\|w_0\|_2$, $\alpha$, $\beta$, $\mu$, $\|K\|_{L^2(0, T;L^2)}$, and $T$.
\end{proposition}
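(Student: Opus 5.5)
We test $\eqref{eqpeini}_1$ against $-\Delta v$ and integrate over $\Omega$. Because $v|_{z=1}=0$ gives $\partial_t v|_{z=1}=0$, and $\partial_zv|_{z=0}=0$, the boundary terms from $\int_\Omega \partial_t v(-\Delta v)$ vanish. We therefore obtain
\begin{align*}
\frac12\frac{d}{dt}\|\nabla v\|_2^2+\mu\|\Delta v\|_2^2=\int_\Omega\left(v\partial_xv+w\partial_zv+\alpha v+\beta w+K\right)\Delta v\,dxdz .
\end{align*}
Elliptic regularity for the mixed Dirichlet--Neumann problem on the flat strip $\mathbb T\times(0,1)$ gives $\|\nabla^2v\|_2\le C(\|\Delta v\|_2+\|v\|_2)$. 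So it suffices to control the right-hand side by $\frac{\mu}{2}\|\Delta v\|_2^2$ plus terms amenable to Gr\"onwall.

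The linear terms are bounded by $\frac{\mu}{8}\|\Delta v\|_2^2+C(\|v\|_2^2+\|w\|_2^2+\|K\|_2^2)$, and these are integrable in time by Propositions \ref{provL2} and \ref{proomegaL2}.

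For the term $\int v\partial_xv\,\Delta v$ we use
\begin{align*}
\|v\partial_xv\|_2\|\Delta v\|_2\le \frac{\mu}{8}\|\Delta v\|_2^2+C\|v\partial_xv\|_2^2 ,
\end{align*}
where $\|v\partial_x v\|_2^2\in L^1(0,T)$ by Proposition \ref{provL4}. This avoids any integration by parts.

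The key term is $\int w\partial_zv\,\Delta v$, and we do not integrate it by parts. This sidesteps the boundary term $\int w|\partial_zv|^2|_{z=1}$, which is exactly the obstruction described in the introduction. We estimate directly:
\begin{align*}
\left|\int_\Omega w\partial_zv\Delta v\right|\le \|w\|_4\|\partial_zv\|_4\|\Delta v\|_2 .
\end{align*}
Ladyzhenskaya's inequality on the 2D domain gives $\|w\|_4\le C\|w\|_2^{1/2}\|w\|_{H^1}^{1/2}$ and $\|\partial_zv\|_4\le C\|\nabla v\|_2^{1/2}\|\nabla v\|_{H^1}^{1/2}$, with $\|\nabla v\|_{H^1}\le C(\|\Delta v\|_2+\|v\|_2)$. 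Then Young's inequality yields
\begin{align*}
\le \frac{\mu}{8}\|\Delta v\|_2^2+C\|w\|_2^2\|w\|_{H^1}^2\|\nabla v\|_2^2+C\|v\|_{H^1}^2 .
\end{align*}
The coefficient $\|w\|_2^2\|w\|_{H^1}^2$ lies in $L^1(0,T)$ by Proposition \ref{proomegaL2}. This is precisely where the previous estimate on $w$ is indispensable: without it one only has $w\sim\partial_xv$, and the coefficient would be of the same order as the quantity being estimated.

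Collecting the bounds gives
\begin{align*}
\frac{d}{dt}\|\nabla v\|_2^2+\mu\|\Delta v\|_2^2\le C\left(1+\|w\|_2^2\|\nabla w\|_2^2\right)\|\nabla v\|_2^2+C\left(\|v\|_{H^1}^2+\|w\|_2^2+\|K\|_2^2+\|v\partial_xv\|_2^2\right).
\end{align*}
Gr\"onwall's inequality, together with Propositions \ref{provL2}, \ref{provL4} and \ref{proomegaL2}, yields the claim. Here $\|v_0\|_4$ is controlled by $\|v_0\|_{H^1}$ through Sobolev embedding. The bound on $\int_0^T\|\nabla^2v\|_2^2\,dt$ then follows from elliptic regularity.

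The main obstacle is justifying that the boundary terms vanish when testing with $-\Delta v$, together with the elliptic estimate under the mixed conditions. Both can be handled using the flat geometry, for instance by even reflection across $z=0$ followed by standard $H^2$ theory.
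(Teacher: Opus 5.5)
Your proposal is correct and follows essentially the same route as the paper. You test with $-\Delta v$ and bound $\int_\Omega w\partial_z v\,\Delta v$ directly by $\|w\|_4\|\partial_z v\|_4\|\Delta v\|_2$ without integrating by parts, then use Ladyzhenskaya and Young to produce the coefficient $\|w\|_2^2\|\nabla w\|_2^2\in L^1(0,T)$ from Proposition \ref{proomegaL2}, and close with Gr\"onwall and the elliptic estimate. The only difference is cosmetic: you carry full $H^1$ norms and lower-order terms, whereas the paper uses the Poincar\'e inequalities from $w|_{z=0}=0$ and $\partial_z v|_{z=0}=0$ to work with $\|\nabla w\|_2$ and $\|\Delta v\|_2$ directly.
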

\begin{proof}
Multiplying $\eqref{eqpeini}_1$ by $-\triangle v$, using the boundary conditions \eqref{eq5}--\eqref{eq6}, it follows from integrating by parts and Lemma \ref{lem1a}, and the H\"{o}lder, Gagliardo-Nirenberg, and Young inequalities that
\begin{align*}
&\frac{1}{2}\frac{d}{dt}\|\nabla v\|_2^2+\mu\|\triangle v\|_2^2\\ \leq& C \left(\|v\partial_xv\|_2+\|v\|_2+\|\partial_xv\|_2+\|K\|_2\right)\|\triangle v\|_2+\|w\|_4\|\partial_zv\|_4\|\triangle v\|_2\\
\leq& C\left(\|v\partial_xv\|_2+\|v\|_2+\|\partial_xv\|_2+\|K\|_2\right)\|\triangle v\|_2+C\|w\|_2^\frac{1}{2}\|\nabla w\|_2^\frac{1}{2}\|\partial_zv\|_2^\frac{1}{2}\|\triangle v\|_2^\frac{3}{2}\\
\leq& \frac{\mu}{2}\|\nabla^2 v\|_2^2+C\|w\|_2^2\|\nabla w\|_2^2\|\nabla v\|_2^2+C\left(\|v\partial_xv\|_2^2+\|v\|_2^2+\|\partial_xv\|_2^2+\|K\|_2^2\right),
\end{align*}
from which, due to Propositions \ref{provL2}--\ref{proomegaL2}, and applying the Gr\"{o}nwall inequality and elliptic estimate, one obtains
\begin{align*}
&\|\nabla v\|_2^2(t)+\mu\int_0^t\|\nabla^2 v\|_2^2ds\\
\leq& e^{\int_0^t\|w\|_2\|\nabla w\|_2^2ds}\left[\|\nabla v_0\|_2^2+C\int_0^t\left(\|v\partial_xv\|_2^2+\|v\|_2^2+\|\partial_xv\|_2^2+\|K\|_2^2\right)ds\right],
\end{align*}
leading to the conclusion.
\end{proof}
Thanks to Propositions \ref{provL2}--\ref{prouL2}, one has
\begin{proposition}\label{partt}
It holds that
\begin{align*}
\int_0^T \|\partial_t v\|_2^2dt\leq C,
\end{align*}
where $C$ is a positive constant depending only $\|v_0\|_{H^1}$, $\|w_0\|_2$, $\alpha$, $\beta$, $\mu$, $\|K\|_{L^2(0, T;L^2)}$, and $T$.
\end{proposition}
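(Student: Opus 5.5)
The estimate follows by reading $\partial_t v$ off the equation and bounding every other term in $L^2(0,T;L^2)$ with the bounds already established in Propositions \ref{provL2}--\ref{prouL2}. From $\eqref{eqpeini}_1$ we have
\begin{align*}
\partial_t v=\mu\triangle v-v\partial_xv-w\partial_zv-\alpha v-\beta w-K,
\end{align*}
and so
\begin{align*}
\|\partial_tv\|_2\leq \mu\|\triangle v\|_2+\|v\partial_xv\|_2+\|w\partial_zv\|_2+|\alpha|\|v\|_2+|\beta|\|w\|_2+\|K\|_2 .
\end{align*}
We square and integrate over $(0,T)$. The linear terms are handled directly:
\begin{itemize}
\item $\int_0^T\|\triangle v\|_2^2dt$ is bounded by Proposition \ref{prouL2};
\item $\int_0^T\|v\|_2^2dt$ is bounded by Proposition \ref{provL2};
\item $\int_0^T\|w\|_2^2dt$ is bounded by Proposition \ref{proomegaL2};
\item $\int_0^T\|K\|_2^2dt$ is finite by hypothesis.
\end{itemize}

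The term $\|v\partial_xv\|_2$ is part of $\||v|\nabla v\|_2$, whose square is integrable in time by Proposition \ref{provL4}.

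The only term needing an actual estimate is $w\partial_zv$, and we treat it as in the proof of Proposition \ref{prouL2}. By H\"older and the two-dimensional Gagliardo--Nirenberg inequality,
\begin{align*}
\|w\partial_zv\|_2\leq\|w\|_4\|\partial_zv\|_4\leq C\|w\|_2^{\frac12}\|w\|_{H^1}^{\frac12}\|\partial_zv\|_2^{\frac12}\|\partial_zv\|_{H^1}^{\frac12}.
\end{align*}
Squaring and using the bounds on $\sup_t\|w\|_2$ and $\sup_t\|\nabla v\|_2$ gives
\begin{align*}
\int_0^T\|w\partial_zv\|_2^2dt\leq C\int_0^T\|w\|_{H^1}\|v\|_{H^2}dt\leq C\int_0^T\left(\|w\|_{H^1}^2+\|v\|_{H^2}^2\right)dt.
\end{align*}
The right-hand side is finite by Propositions \ref{proomegaL2} and \ref{prouL2}, together with the elliptic estimate $\|v\|_{H^2}\le C(\|\triangle v\|_2+\|v\|_2)$, which holds under the boundary conditions \eqref{eq5}--\eqref{eq6}.

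This last term is the only mild obstacle, and it was already overcome in Proposition \ref{prouL2}. Each constant above depends only on the listed quantities; in particular $\|v_0\|_4$ is controlled by $\|v_0\|_{H^1}$ by Sobolev embedding. Adding the bounds proves the proposition.
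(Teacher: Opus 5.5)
Your proposal is correct and follows the same route as the paper: the paper also reads $\partial_t v$ off $\eqref{eqpeini}_1$ and bounds each term in $L^2(0,T;L^2)$ using Propositions \ref{provL2}--\ref{prouL2}. Your bound $\|w\partial_zv\|_2\leq\|w\|_4\|\partial_zv\|_4$ followed by Gagliardo--Nirenberg and Young spells out the step the paper leaves as ``follows easily.''
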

\begin{proof}
Using $\eqref{eqpeini}_1$, the conclusion follows easily by Propositions \ref{provL2}--\ref{prouL2}.
\end{proof}

\section{The proof of the Theorem \ref{the1}}
\begin{proof}
It suffices to show the global existence as the uniqueness is guaranteed by the local well-posedness as in Proposition \ref{prolocal}. Applying Proposition \ref{prolocal} iteratively, one gets a unique strong solution $v$ up to the maximal time $T_*$ of existence.

It suffices to verify $T_*=+\infty$. Assume by contradiction that $T_*<+\infty$. Then, one has
\begin{align}
\overline{\lim_{t\rightarrow T_*^-}}\|v\|_{H^1}^2(t)=+\infty. \label{econt1s}
\end{align}
By Propositions \ref{provL2}--\ref{prouL2}, it holds that
\begin{align*}
\sup_{0\leq t\leq T}\|v\|_{H^1}^2+\int_0^T\|v\|_{H^2}dt\leq C,\quad \forall T\in (0, T_*),
\end{align*}
where $C$ is a positive constant independent of $T_*$. This contradicts to \eqref{econt1s}. Therefore $T_*=+\infty$ and $v$ is a global solution.
\end{proof}
\section*{Acknowledgments}

This work was supported in part by the National Nature Science Foundation of
China (Grant No. 12371204 and No. 12526408), the Key Project of National Nature Science Foundation
of China (Grant No. 12131010), and Guangdong Basic and Applied Basic Research
Foundation (Grant No. 2026A1515010778).

\end{document}